\theoremstyle{plain}
\newtheorem*{theorem*}{Theorem}
\newtheorem{theorem}{Theorem}[section]
\newtheorem{lemma}[theorem]{Lemma}
\newtheorem*{claim*}{Claim}
\newtheorem{corollary}[theorem]{Corollary}
\newtheorem{conjecture}[theorem]{Conjecture}
\newtheorem{question}[theorem]{Question}
\theoremstyle{remark}
\newcommand{\F}{\mathcal{F}}
\newcommand{\HH}{\mathcal{H}}
\newcommand{\PP}{\mathcal{P}}
\newcommand{\defi}{\text{def}}
\let\emptyset\varnothing
\let\eps\varepsilon
\let\originalleft\left
\let\originalright\right
\renewcommand{\left}{\mathopen{}\mathclose\bgroup\originalleft}
\renewcommand{\right}{\aftergroup\egroup\originalright}
\title{Completion and deficiency problems}
\author{Rajko Nenadov}
\thanks{Department of Mathematics, ETH, Z\"urich, Switzerland. Email: \href{mailto:rajko.nenadov@math.ethz.ch} {\nolinkurl{rajko.nenadov@math.ethz.ch}}.
\\ Research supported in part by SNSF grant 200021-175573.}
\author{Benny Sudakov}
\thanks{Department of Mathematics, ETH, Z\"urich, Switzerland. Email:
\href{mailto:benjamin.sudakov@math.ethz.ch} {\nolinkurl{benjamin.sudakov@math.ethz.ch}}.
\\ Research supported in part by SNSF grant 200021-175573.}
\author{Adam Zsolt Wagner}
\thanks{Department of Mathematics, ETH, Z\"urich, Switzerland. Email:
\href{mailto:zsolt.wagner@math.ethz.ch} {\nolinkurl{zsolt.wagner@math.ethz.ch}}.}
\begin{document}
\begin{abstract}
    Given a partial Steiner triple system (STS) of order $n$, what is the order of the smallest complete STS it can be embedded into? The study of this question goes back more than 40 years. In this paper we answer it for relatively sparse STSs, showing that given a partial STS of order $n$ with at most $r \le \eps n^2$ triples, it can always be embedded into a complete STS of order $n+O(\sqrt{r})$, which is asymptotically optimal. We also obtain similar results for completions of Latin squares and other designs.

    This suggests a new, natural class of questions, called \emph{deficiency problems}. Given a global spanning property $\PP$ and a graph $G$, we define the deficiency $\defi(G)$ of the graph $G$ with respect to the property $\PP$ to be the smallest positive integer $t$ such that the join $G\ast K_t$ has property $\PP$. To illustrate this concept we consider  deficiency versions of some well-studied properties, such as having a $K_k$-decomposition, Hamiltonicity,  having a triangle-factor and having a perfect matching in hypergraphs. 
    
    The main goal of this paper is to propose a systematic study of these problems; thus several future research directions are also given.
\end{abstract}

\maketitle

\section{Completion problems}

\subsection{Steiner triple systems and $(n,k)$-block designs} \label{subsec:design}

\emph{Steiner triple systems} (STSs) are one of the most classical objects studied in combinatorial design theory, dating back to Kirkman~\cite{kirkman}. We say that a family $\F$ of $3$-element subsets, called \emph{blocks}, of an $n$-element set $X$ is a STS if any pair of distinct elements of $X$ are contained in precisely one block. In 1847, Kirkman~\cite{kirkman} proved that a Steiner triple system exists if and only if $n\equiv 1,3 \text{ (mod }6)$. 

A family $\F$ of $3$-element subsets of an $n$-element set $X$ is a \emph{partial STS} if any pair of distinct elements of $X$ is contained in at most one block. Given a partial STS $\F$ on a set $X$, we say that a (complete) STS $\F'$ on a set $X'$ is an \emph{embedding} of $\F$ if $X \subseteq X'$ and $\F \subseteq \F'$. This naturally prompts the following question: can every partial STS $\F$ be obtained from some STS $\F'$ by deleting a number of elements, or, equivalently, does every partial STS have an embedding; if yes, then what is the order\footnote{An order of a (partial) STS is the size of the underlying set $X$.} of a smallest such STS? In 1977 Lindner conjectured~\cite{lindner} that any partial Steiner system of order $n$ has an embedding of order $n'$, for each $n'\geq 2n+1$ such that $n'\equiv 1,3 \text{ (mod }6)$. After some progress over the years~\cite{lindner1,lindner2,lindner3,lindner4} the conjecture was recently proved by Bryant and Horsley~\cite{lindnerproof}. While the bound $2n+1$ is sharp in general, it is a natural question whether this can be improved if $\F$ is sparse, that is if $\F$ contains only a few blocks. Some results along these lines were obtained in~\cite{colbourn1983completing,bryant2002conjecture,bryant2004existence,horsley2014embedding,horsley2014small}. In particular, Horsley~\cite{horsley2014embedding} showed that if $\F$ has at most $n^2/50 - o(n^2)$ blocks then it has an embedding of order $8n/5 + O(1)$. In this paper we determine an asymptotically optimal bound for embeddings of relatively sparse partial STSs.
\begin{theorem}\label{thm:steiner}
There exist absolute constants $\eps, n_0 > 0$ such that the following holds. If $\F$ is a partial Steiner triple system of order $n \ge n_0$ with $|\F|\leq \eps n^2$ blocks, then there exists an embedding of $\F$ of order at most $n + O(\sqrt{|\F|})$.
\end{theorem}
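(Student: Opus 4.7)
My approach is to rephrase the existence of the embedding as a triangle-decomposition problem. To embed $\F$ into an STS of order $N$, introduce a set $Y$ of $t$ new vertices disjoint from $X$ and observe that an STS on $V = X \cup Y$ containing $\F$ corresponds exactly to a $K_3$-decomposition of the graph $H := K_N - G_\F$, where $G_\F$ is the graph on $V$ whose edges are the pairs covered by $\F$. I would set $t = \Theta(\sqrt{|\F|})$, large enough for the later steps, and ensure $N = n + t \equiv 1, 3 \pmod 6$. This last condition automatically takes care of the divisibility requirements for triangle decompositions of $H$: every vertex of $H$ has even degree (since $N$ is odd), and $e(H) = \binom{N}{2} - 3|\F|$ is divisible by $3$.

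It is natural to try to invoke a powerful triangle-decomposition theorem, for instance Keevash's existence theorem for designs or the high-minimum-degree version of Barber--K\"uhn--Lo--Osthus. The main obstacle, however, is that $H$ can have very low degree at a few vertices: if $x \in X$ has $d_\F(x)$ close to $(n-1)/2$ (the maximum possible in a partial STS on $X$), then $\deg_H(x) = N - 1 - 2 d_\F(x)$ can be as small as $t$, far below the $(1 - o(1))N$ minimum degree required by these black-box results. Some preprocessing is therefore essential to eliminate these pathological vertices.

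To carry this out, I would classify $x \in X$ as \emph{bad} if $d_\F(x) > \delta n$ and \emph{good} otherwise, for a suitable small $\delta$. The double-count $\sum_x d_\F(x) = 3|\F|$ gives $|D| \le 3\eps n /\delta$ bad vertices. For each bad $x$, the blocks through $x$ in the eventual STS that do not lie in $\F$ must form a perfect matching of $V \setminus N_\F[x]$ (here $N_\F[x]$ is $x$ together with the vertices sharing a block with $x$ in $\F$), using edges of $H$. I would produce such matchings one bad vertex at a time via a random/greedy procedure, biased towards pairs meeting the fresh vertex set $Y$, while maintaining that (i) the matchings are pairwise edge-disjoint, (ii) no matching edge lies in $G_\F$, and (iii) each good or new vertex loses only $o(N)$ edges in total. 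Since $|D| \ll t$ (because $\eps$ is small and $t = \Theta(\sqrt{|\F|})$), and each bad matching has at most $(N-1)/2$ edges distributed over a set of size at least $t$, a standard probabilistic or alteration argument yields such matchings; the bound (iii) comes essentially for free because each vertex $v$ can be affected by at most $|D|$ bad matchings.

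After the absorption step, the bad vertices are "used up" (isolated in the residual graph $H'$), divisibility is preserved (each removed triangle drops two from each endpoint's degree), and every remaining vertex of $H'$ has degree at least $(1 - \delta')N$. Applying Keevash's theorem, or the minimum-degree triangle-decomposition theorem of Barber--K\"uhn--Lo--Osthus, decomposes $H'$ into triangles. Combining $\F$, the absorbing blocks at bad vertices, and this decomposition yields a complete STS of order $n + O(\sqrt{|\F|})$ containing $\F$. The principal technical difficulty is the absorption step: one must choose the matchings for the bad vertices coherently so that the residual graph $H'$ simultaneously enjoys near-complete minimum degree, the parity/divisibility conditions, and any quasi-randomness hypothesis needed to invoke the chosen decomposition theorem.
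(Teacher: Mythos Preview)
Your proposal is correct and follows essentially the same strategy as the paper's proof (which goes via Lemma~\ref{lem:kkdecomp} with $k=3$): reformulate the embedding as a triangle-decomposition of $G\ast K_t$, absorb each high-$\F$-degree vertex by covering all its incident $H$-edges with triangles coming from a perfect matching in its $H$-neighbourhood (the paper does this deterministically via Lemma~\ref{lem:defHajSze}, using the $t$ new vertices as a buffer exactly as you suggest), and then apply Gustavsson's theorem to the residual high-minimum-degree graph. The only notable difference is that the paper sets the bad-vertex threshold at $\Theta(\sqrt{|\F|})$ rather than your $\delta n$, which gives at most $\sqrt{|\F|}$ bad vertices and hence the explicit bound $t\le 7k^2\sqrt{|\F|}$; your random/greedy matching step can be replaced wholesale by the deterministic Lemma~\ref{lem:defHajSze}.
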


Similarly to Steiner triple systems, an $(n,k)$-design is a collection of blocks of size $k$ covering each pair of elements exactly once (and so a STS is a $(n,3)$-design). Here the notions of a partial $(n,k)$-design and an embedding are defined analogously as in the case of Steiner triple systems (i.e.\ $k = 3$). Wilson~\cite{wilson} proved that the necessary divisibility conditions suffice for the existence of such designs. Embedding problems of partial $(n,k)$-designs, for $k\geq 4$, have also been studied, see e.g.~\cite{ganterquad,phelpssurvey}. We obtain the following generalization of Theorem~\ref{thm:steiner}:

\begin{theorem}\label{thm:kkdecomp}
For every integer $k \geq 3$, there exist $\eps, n_0 > 0$ such that the following holds. If $\F$ is a partial $(n,k)$-design of order $n \ge n_0$ with $|\F| \le \eps n^2$ blocks, then there exists an embedding of $\F$ of order at most $n + 7k^2\sqrt{\F}$.
\end{theorem}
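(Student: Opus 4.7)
The plan is to add a set $Y$ of $t = O(k^2\sqrt{|\F|})$ auxiliary vertices to $X$ so that $n' := n+t$ is admissible for a complete $(n',k)$-design, and then realise the completion as a $K_k$-decomposition of the graph $H$ on $X' := X\cup Y$ whose edges are precisely the pairs \emph{not} covered by $\F$. Each $K_k$ becomes a new block, so together with $\F$ this is the desired embedding.

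First I choose $t$. The admissible orders $n'$ form a union of arithmetic progressions with common difference dividing $k(k-1)$, so for $|\F|$ above a $k$-dependent constant there is a valid $t \in [\lceil\sqrt{|\F|}\rceil,\, \lceil\sqrt{|\F|}\rceil + k(k-1)]$, well within the budget $7k^2\sqrt{|\F|}$; the sub-constant regime can be handled by invoking a general embedding result. With this choice $H$ is $K_k$-divisible: for $v\in X$, $d_H(v) = n'-1 - (k-1)d_\F(v) \equiv 0 \pmod{k-1}$; for $v\in Y$, $d_H(v) = n'-1$; and $|E(H)| = \binom{n'}{2} - \binom{k}{2}|\F|$ is divisible by $\binom{k}{2}$.

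Ideally I would now invoke a $K_k$-decomposition theorem (of Glock--K\"uhn--Lo--Osthus type, or Keevash's existence result for designs) which decomposes any $K_k$-divisible graph of minimum degree above a threshold $(1-\delta_k)n'$. The obstruction is that $H$ can have low-degree vertices: a vertex $v\in X$ with $d_\F(v)$ close to the maximum $(n-1)/(k-1)$ has $d_H(v)$ as small as $t$. Call $v\in X$ \emph{heavy} if $d_\F(v) > (1-\alpha)\cdot(n-1)/(k-1)$ for a small constant $\alpha = \alpha(k) \ll \delta_k$. A double count gives at most $h \le k^2|\F|/((1-\alpha)(n-1)) = O(k^2|\F|/n)$ heavy vertices, each with $d_H(v) \le \alpha n + t$.

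The main work is the pre-processing step that removes heavy vertices. For each heavy $v$ in turn, I partition $N_H(v)$ into $(k-1)$-cliques of the current $H$, each yielding (with $v$) a new block. The link graph $H[N_H(v)]$ is very dense (there are at most $\binom{k}{2}|\F|$ non-edges in all of $H$), so after discarding a tiny exceptional set a $K_{k-1}$-factor exists by a standard application of the Hajnal--Szemer\'edi/blow-up framework; the exceptional vertices are absorbed through $Y$. Across all heavy $v$ the total number of new blocks produced is $O(h(\alpha n + t)/k) = O(k\alpha|\F|) + O(k|\F|t/n)$, and each such block consumes at most $\binom{k-1}{2}$ pairs inside $Y$. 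Balancing this demand against the supply $\binom{t}{2}$ gives exactly $t = O(k^{3/2}\sqrt{\alpha|\F|}) \le 7k^2\sqrt{|\F|}$ for $\alpha$ a small constant, which is where the bound $\sqrt{|\F|}$ enters.

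After pre-processing, the residual graph $H'$ has $\delta(H') \ge (1-O(k\alpha))n'$ and is still $K_k$-divisible, since each removed block decreases every incident degree by $k-1$ and the total edge count by $\binom{k}{2}$. Choosing $\alpha$ small enough to beat the decomposition threshold, I apply the $K_k$-decomposition theorem to $H'$ and take the union of $\F$, the pre-processing blocks, and the blocks from the decomposition of $H'$. The delicate part---and the point I expect to be the main obstacle---is controlling conflicts between heavy vertices during pre-processing (for instance, two heavy vertices that are non-adjacent in $H$ and must still be incorporated into consistent blocks, or overlapping demands on the pairs inside $Y$); I plan to resolve these by processing heavy vertices in a random order and by reserving a small random subset of $Y$ as a buffer for absorbing leftover edges.
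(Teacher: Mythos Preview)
Your overall architecture is the same as the paper's---translate to a $K_k$-decomposition problem for the graph $H$ of uncovered pairs on $X\cup Y$, pre-process the troublesome vertices by covering all their incident $H$-edges with $K_k$'s, then invoke a Gustavsson-type theorem on the dense residual---but the threshold defining ``heavy'' is backwards, and this breaks the argument. With your definition a \emph{non}-heavy vertex may have $d_\F(v)$ as large as $(1-\alpha)(n-1)/(k-1)$, hence
\[
d_H(v) \;=\; n'-1-(k-1)d_\F(v) \;\ge\; t+\alpha(n-1)\;\approx\;\alpha n,
\]
and nothing more. Such a vertex survives pre-processing untouched, so the residual graph $H'$ has $\delta(H')$ of order $\alpha n$, not $(1-O(k\alpha))n'$ as you assert; no $K_k$-decomposition theorem applies. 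In short, you have made the heavy vertices cheap to handle, but it is the non-heavy ones that obstruct Gustavsson.

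The paper sets the threshold the other way: a vertex is \emph{bad} when it lies in many blocks, concretely when it is missing more than $k^2\sqrt{|\F|}$ edges in $H$. This yields at most $\sqrt{|\F|}$ bad vertices, each of which may have $H$-degree up to $n'-1$, so covering all edges at a bad vertex needs up to $(n'-1)/(k-1)$ edge-disjoint $K_k$'s. The non-trivial step (Lemma~\ref{lem:defHajSze}) is showing the link of each bad vertex admits a $K_{k-1}$-factor despite the at most $\binom{k}{2}|\F|$ globally missing edges; for this one needs $|Y|\ge 5k^2\sqrt{|\F|}$, which is why the paper takes $t\approx 6k^2\sqrt{|\F|}$ rather than your $t\approx\sqrt{|\F|}$. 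The $\sqrt{|\F|}$ in the final bound therefore comes from the count of bad vertices and the Hajnal--Szemer\'edi-type requirement on $|Y|$, not from the pair-supply balancing you sketch. Incidentally, the interaction between two bad vertices that you flag as the main obstacle is benign under sequential processing: once $b_j$ has been handled, the edge $b_{i+1}b_j$ (if present in $H$) already sits in some $K_k$ and is simply absent from the current graph when $b_{i+1}$ is processed.
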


\noindent
Theorem~\ref{thm:kkdecomp} (and therefore Theorem~\ref{thm:steiner}) is tight up to a multiplicative constant in the number of added elements with respect to both $k$ and $|\F|$ for every $k \ge 3$ and $|\F| \ge 2n$. We present a construction which shows this in Section~\ref{sec:designs}. For $|\F| < n/2$ and $k = 3$, Horsley~\cite{horsley2014embedding} conjectured that in fact there exists an embedding of $\F$ which is of the same order as $\F$.

Note that a STS on a set of size $n$ corresponds to a $K_3$-decomposition of a complete graph on $n$ vertices. Recall that a $K_k$-decomposition of a graph $G$ is a partition of the edge set of $G$ into copies of $K_k$. 

\subsection{Latin squares}

Completion problems can also be studied for Latin squares, which are another classical combinatorial objects dating back to Ozanam~\cite{ozanam} and Euler~\cite{euler}. Recall that an $n \times n$ matrix $M$ with entries in $[n]$ forms a Latin square if $M_{i,j} \neq M_{i', j}$ and $M_{i,j} \neq M_{i,j'}$ for all $i\neq i'$ and $j\neq j'$. In other words, every element of $[n]$ appears exactly once in each row and each column. To be consistent with terminology from Section \ref{subsec:design}, we say that $M$ is of order $n$. Notions of partial Latin squares and embeddings are also defined analogously, as follows. An $n \times n$ matrix $P$ over $[n] \cup \{\ast\}$ forms a \emph{partial} Latin square if $P_{i', j} \neq P_{i,j} \neq P_{i, j'}$ for all $i\neq i' $ and  $j\neq j'$ such that $P_{i,j} \neq \ast$. Thinking of $\ast$ as the symbol for `empty', we have that every element of $[n]$ appears at most once in every row and column. Let us denote with $|P|$ the number of non-empty cells in $P$. Given a partial Latin square $P$ over $[n]$ and a (complete) Latin square $M$ over $[n']$, for some $n' \ge n$, we say that $M$ is an \emph{embedding} of $P$ if $M_{i,j} = P_{i,j}$ for every $1\leq i,  j\leq n$ such that $P_{i,j} \neq \ast$. If $M$ and $P$ are of the same order, that is $n' = n$, we say that $M$ is a \emph{completion} of $P$.


Completions and embeddings of Latin squares have been extensively studied as well and have a long history, see e.g.~\cite{ryser, cruse, latin3, latin4}. A classical theorem of Smetaniuk~\cite{smetaniuk} as well as Anderson and
Hilton~\cite{andhilt} states that every partial Latin square of order $n$ with at most $n-1$ non-empty cells has a completion. This is the best possible result in the sense that there exists a partial Latin square of order $n$ with exactly $n$ non-empty cells which does not have a completion: put $1$ on every cell of the main diagonal except the last one, and in the last one put $2$. Thus in order to be able to complete a partial Latin square with more than $n-1$ empty cells we need to impose further structural restrictions. One such possible restriction was conjectured by Daykin and H\"aggkvist~\cite{dayhag}: every partial Latin square such that each row, column and symbol are used at most $n/4$ times (that is, each row and column contain at least $3n/4$ elements $\ast$) has a completion. For discussion and sharpness examples see~\cite{wanless}. 

Evans~\cite{evans} showed that  any partial Latin square  of order $n$ has an embedding of order $2n$, and this bound is sharp. Our next theorem solves asymptotically the completion problem for sparse Latin squares.
\begin{theorem}\label{thm:latin}
There exist absolute constants $C,\eps,n_0 >0$ such that the following holds. If $L$ is a partial Latin square of order $n\geq n_0$ with $|L| \le \eps n^2$, then $L$ has an embedding of order $n'$ for some $n'\leq n + C\sqrt{|L|}$.
\end{theorem}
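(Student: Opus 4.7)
The natural strategy is to reformulate Latin-square completion as a triangle decomposition problem, via the classical bijection between Latin squares of order $N$ and triangle decompositions of $K_{N,N,N}$ with parts indexed by rows, columns, and symbols. Write the three parts as $X, Y, Z$, each of size $n+t$, where $t = C\sqrt{|L|}$ for a sufficiently large constant $C$ to be fixed later. The partial Latin square $L$ then corresponds to a set $\T_L$ of $|L|$ edge-disjoint triangles in $K_{n,n,n}\subseteq K_{n+t,n+t,n+t}$, and producing an embedding of $L$ of order $n+t$ is equivalent to finding a triangle decomposition (with one vertex in each part) of the tripartite graph $G := K_{n+t,n+t,n+t} \setminus E(\T_L)$.

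The first step is to verify the necessary divisibility and balance conditions for such a decomposition. For any old row vertex $x$, the degree of $x$ to $Y$ in $G$ equals its degree to $Z$: each non-$\ast$ cell in row $x$ kills one edge from $x$ to $Y$ and one edge from $x$ to $Z$, while the new part contributes $t$ fresh edges on either side. The analogous equalities hold at column and symbol vertices, and trivially at new vertices; moreover, each of the three bipartite sides of $G$ contains exactly $(n+t)^2 - |L|$ edges, which is the number of triangles that any decomposition must use. All divisibility conditions are thus automatically satisfied.

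With divisibility in hand, the plan is to appeal to a black-box triangle decomposition theorem for near-complete tripartite graphs (of the type developed by Keevash, Barber--K\"uhn--Lo--Osthus, and others): every balanced tripartite graph on parts of size $N$ with minimum bipartite degree at least $(1-\eta)N$ admits a triangle decomposition, provided the divisibility conditions hold. For every vertex $v$ with at most $\alpha n$ uses in $L$ (where $\alpha = \alpha(\eta)$ is chosen small), the corresponding degrees of $v$ in $G$ are at least $(1-\alpha)(n+t)$, so such vertices cause no trouble. Since $\sum_v d_L(v) = 3|L|$, the set $H$ of heavy vertices (those used more than $\alpha n$ times) has size at most $3|L|/(\alpha n) = O(\sqrt{|L|})$, comparable to $t$ itself.

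The main technical obstacle is absorbing the low-degree behaviour at the vertices of $H$ before the black-box theorem can be applied. I would preprocess $G$ by assigning, for each heavy vertex, a batch of triangles through it whose other two vertices include at least one new vertex: for example, for a heavy row $x$ one greedily (or randomly) matches its remaining $Y$-edges with $Z$-edges through a common new vertex of the third part, thereby clearing all deficient pairs at $x$ while preserving the per-vertex degree equalities and the three bipartite totals. The delicate part is carrying out this preprocessing simultaneously for all heavy rows, columns, and symbols without overloading any single new vertex (each has total capacity $\Theta(n)$ but absorbs only $O(\sqrt{|L|})$ preprocessing triangles), and leaving a residual graph in which every vertex still has degree at least $(1-\eta)(n+t)$ to each other part. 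Once this is achieved, the decomposition theorem completes the embedding and yields a Latin square of order $n + t \le n + C\sqrt{|L|}$, as required.
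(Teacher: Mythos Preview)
Your outline is essentially the paper's proof: translate $L$ to a family of edge-disjoint triangles in $K_{n,n,n}$, enlarge each part by $t=\Theta(\sqrt{|L|})$ new vertices, neutralise the small set of heavy vertices, and then invoke a multipartite Gustavsson-type theorem (the paper cites Barber--K\"uhn--Lo--Osthus--Taylor) on the residual graph; the divisibility/balance check you give is exactly the one the paper uses.

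The one place where your write-up is imprecise is the preprocessing at a heavy vertex. The sentence ``match its remaining $Y$-edges with $Z$-edges through a common new vertex of the third part'' does not describe a valid procedure: there are only $t=O(\sqrt{|L|})$ new vertices in each part, far fewer than the $\sim n$ edges at $x$ that must be absorbed, so you cannot route each edge at $x$ through a new vertex. What the paper actually does (and what you should do) is, for each bad vertex $b$, find a \emph{perfect matching} between $N_G(b)\cap Y$ and $N_G(b)\cap Z$ in the current graph, turning every edge at $b$ into a triangle and thereby isolating $b$ completely. This matching exists because the bipartite graph on $N(b)$ has minimum degree at least its order minus $O(\sqrt{|L|})$ (the new vertices in $T_Y,T_Z$ supply the slack, and processing earlier bad vertices costs at most one edge each); the paper packages this as a multipartite Hajnal--Szemer\'edi lemma. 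After padding so that each part loses the same number of bad vertices, the remaining balanced tripartite graph has minimum partite degree $(1-o(1))N$ and the black-box decomposition theorem finishes.
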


\noindent
This theorem is tight up to the value of the constant $C$ (see the discussion following the proof). Notice that a (complete) Latin square corresponds to a $K_3$-decomposition of the complete tripartite graph $K_{n,n,n}$. Thus Theorem~\ref{thm:latin} can be seen as a \emph{multi-partite} analogue of Theorem \ref{thm:steiner}. Next, we discuss a Latin square analogue of Theorem \ref{thm:kkdecomp}.

We say that two (partial) Latin squares $R$ (red) and $B$ (blue) of order $n$ are \emph{orthogonal} if no two cells contain the same combination of red symbol and blue symbol, unless one of these cells is empty. More precisely, we have $(R_{i,j}, B_{i,j}) \neq (R_{i',j'}, B_{i', j'})$ for every $1\leq i, i', j, j' \leq n$ such that $(i,j) \neq (i', j')$ and $R_{i,j}, R_{i', j'}, B_{i,j}, B_{i',j'} \neq \ast$. Given a family $\PP = \{P_1, \ldots, P_r\}$ of partial Latin squares of order $n$, let us denote with $|\PP|$ the number of cells $(i,j) \in [n]^2$ such that $(P_k)_{i,j} \neq \ast$ for some $k \in [r]$. In other words, if we `draw' all Latin squares from $\PP$ into the same $n \times n$ grid, then $|\PP|$ denotes the number of cells which contain at least one non-empty symbol.

 In~\cite{barber} it was proved that if we are given $r$ mutually orthogonal partial Latin  squares $P_1, \ldots, P_r$ of order $n$ such that in each row and column all but at most $c_r n$ cells are empty in every $P_i$ (where $c_r$ is a specific small constant depending on $r$), then they can be completed to a set of mutually orthogonal Latin squares. We show that if we drop the condition that every row and column has a bounded number of non-empty cells, but instead ask only for not too many cells to be filled in, there exist embeddings $P_1', \ldots, P_r'$ of $P_1, \ldots, P_r$ respectively, each of order $n'$ for some $n'$ which is only slightly larger than $n$, which are also pairwise orthogonal.

\begin{theorem}\label{thm:latinort}
For every integer $r\geq 3$ there exists positive $\eps$ and integer $ n_0$ such that the following holds. If $\PP = \{P_1, \ldots, P_{r-2}\}$ are $r-2$ pairwise orthogonal partial Latin squares of order $n$ such that $ |\PP| \le \eps n^2$, then there exist pairwise orthogonal Latin square $P_1', \ldots, P_{r-2}'$ of order $n' \le n + Cr^2\sqrt{|\PP|}$ such that each $P_i'$ is an embedding of $P_i$. 
\end{theorem}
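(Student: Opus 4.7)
The plan is to reduce Theorem~\ref{thm:latinort} to a completion problem for $K_r$-decompositions of a complete $r$-partite graph, mirroring the approach underlying Theorems~\ref{thm:kkdecomp} and~\ref{thm:latin}. Recall the classical correspondence between $r-2$ mutually orthogonal Latin squares of order $n$ and a transversal design $TD(r,n)$: take disjoint sets $G_1,\ldots,G_r$ of size $n$ representing ``rows'', ``columns'' and the $r-2$ symbol alphabets; every fully specified cell $(i,j)$ with values $(s_1,\ldots,s_{r-2})$ contributes a transversal $K_r$ on $\{i,j,s_1,\ldots,s_{r-2}\}$ in the complete $r$-partite graph $G := K_{n,\ldots,n}$ with parts $G_1,\ldots,G_r$, and pairwise orthogonality is exactly the statement that these $K_r$'s are pairwise edge-disjoint. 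Hence $\PP$ encodes a partial transversal $K_r$-decomposition $\mathcal{D}_0$ of $G$, augmented by a small family of \emph{partial} transversals coming from cells in which some but not all of the $P_k$ are specified.

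Next, I would adjoin $t = \Theta(r^2\sqrt{|\PP|})$ new elements to each of the $r$ parts to obtain $G' := K_{n+t,\ldots,n+t}$, and first extend every partial transversal in $\mathcal{D}_0$ to a complete transversal $K_r$ in $G'$ by assigning hitherto unused new symbols/rows/columns to its missing coordinates. Since the total number of missing coordinates is at most $(r-2)|\PP|$, spread across $r-2$ alphabets each enlarged by $t = \Omega(\sqrt{|\PP|})$ new elements, a simple greedy (or random-and-prune) argument produces these extensions so that the resulting blocks are pairwise edge-disjoint. This yields a partial transversal $K_r$-decomposition $\mathcal{D}$ of $G'$ using only $O(nt + r|\PP|) = o(n^2)$ edges, all blocks of which are genuine transversals.

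It then suffices to $K_r$-decompose the leave $H := G' \setminus E(\mathcal{D})$ into transversal copies of $K_r$, since reading such a decomposition back as symbols gives the desired Latin squares $P'_1,\ldots,P'_{r-2}$ of order $n+t$ extending the $P_i$. For this I would invoke a robust multipartite $K_r$-decomposition theorem of the type developed by Barber, K\"uhn, Lo, Montgomery and Osthus~\cite{barber}, or a Keevash-style absorber adapted to the $r$-partite setting: any $r$-partite graph on equal parts which (i) satisfies the necessary divisibility conditions for a transversal $K_r$-decomposition and (ii) has sufficiently small ``deficiency'' at every pair of parts admits such a decomposition. Condition (ii) holds automatically since $\mathcal{D}$ removes only $o(n^2)$ edges; condition (i) can be arranged by choosing $t$ in a suitable congruence class and locally re-shuffling at most $O(r^2\sqrt{|\PP|})$ of the extension blocks.

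The main obstacle, exactly as in Theorem~\ref{thm:kkdecomp}, will be the simultaneous enforcement of the divisibility constraints and the minimum-codegree/typicality hypothesis of the black-box decomposition theorem at the sharp scale $t = O(r^2\sqrt{|\PP|})$: the divisibility conditions for a transversal $K_r$-decomposition of the $r$-partite graph involve $\binom{r}{2}$ pair-types on each of the $r$ parts, and each partial block may impose up to $r-2$ missing coordinates, which is where the $r^2$ factor in the final bound originates. The remaining ingredients, namely the graph-theoretic reduction and the invocation of the multipartite completion black box, should then follow along the lines of the proofs of Theorems~\ref{thm:kkdecomp} and~\ref{thm:latin}.
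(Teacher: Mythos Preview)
Your high-level plan coincides with the paper's: translate $\PP$ into a family of edge-disjoint (possibly partial) transversals in $K_{n,\ldots,n}$, enlarge each part by $\Theta(r\sqrt{|\PP|})$ vertices, extend each partial transversal to a full $K_r$ (this is Lemma~\ref{lem:extend} in the paper), and then $K_r$-decompose the leave via the multipartite analogue of Gustavsson's theorem (Theorem~\ref{thm:gust_multip}, from~\cite{barber}).

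The genuine gap is your claim that the minimum-degree hypothesis of the black box ``holds automatically since $\mathcal{D}$ removes only $o(n^2)$ edges.'' It does not: a single row, column, or symbol can occur in up to $n$ cells of $\PP$, so the corresponding vertex may lose up to $n$ edges into each other part, which is far outside the $(1-\gamma)n$ regime demanded by Theorem~\ref{thm:gust_multip}. This is precisely the point at which the paper does real work. It identifies the $O(\sqrt{m})$ \emph{bad} vertices (those with $r_v > k\sqrt{m}$), and for each bad vertex $b$ covers every edge of the leave incident to $b$ by a family of edge-disjoint $K_r$'s through $b$; this requires finding a $K_{r-1}$-factor in the neighbourhood of $b$ after the earlier $K_r$'s have been stripped out, which is the content of the multipartite Hajnal--Szemer\'edi-type Lemma~\ref{lemma:defLatinortHSz}. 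Only after the bad vertices are deleted does every surviving vertex have lost at most $O(k\sqrt{m})$ edges into each other part, at which point the black box applies. Your sketch skips this cleaning step entirely, and it cannot be recovered from a global $o(n^2)$ edge count.

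Two smaller points. First, the extension of partial transversals to full $K_r$'s is not quite a one-line greedy: Lemma~\ref{lem:extend} also needs a bad-vertex split (cliques meeting two or more heavy vertices are handled first, using a reserved half of the new vertices) together with a load-balancing invariant to keep every non-bad vertex in at most $2\sqrt{m}$ cliques. Second, your anticipated ``main obstacle'' of divisibility does not in fact arise here: once the parts are balanced and all removed edges come from edge-disjoint transversal $K_r$'s, every vertex automatically has the same degree into each other part, which is all that Theorem~\ref{thm:gust_multip} requires; no congruence tuning of $t$ is needed in the multipartite setting.
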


\noindent
As with the previous theorems, Theorem~\ref{thm:latinort} is tight up to the value of $C$ and a sharpness example is presented after the proof. Note that for $r = 3$ we have that Theorem~\ref{thm:latinort} is equivalent to Theorem~\ref{thm:latin}.

To see the connection between Theorem~\ref{thm:latinort} and Theorem~\ref{thm:kkdecomp}, observe that a pair of orthogonal complete Latin squares corresponds to a $K_4$-decomposition of $K_{n,n,n,n}$, a $4$-partite complete graph with each part being of size $n$. In general, a set of $r-2$ pairwise orthogonal complete $n\times n$ Latin squares corresponds to a $K_r$-decomposition of the complete $r$-partite graph with vertex classes of size $n$ (this connection is discussed in detail in Section~\ref{sec:latin}). Thus, not surprisingly, the proof of Theorem~\ref{thm:latinort} will follow the idea of the proof of Theorem~\ref{thm:kkdecomp}. However, there are some additional difficulties in this case which did not occur in Theorem~\ref{thm:kkdecomp}.

\section{A general class of completion problems and the notion of deficiency}
Our results in Section 1 suggest the following new class of extremal problems. Given a global, spanning property $\PP$ (e.g.~Hamiltonicity) it is easy to see that by deleting even some small number of edges from the complete graph we can obtain a graph which does not satisfy $\PP$. The reason is that isolating a vertex can be done cheaply by deleting only $n-1$ edges. This is why historically Tur\'an type problems were mostly studied for local properties (e.g.~containing a triangle) whereas for global properties a minimum degree condition was usually added to avoid the above issues. In the present paper we look at this problem differently.  

For a graph $G$ and integer $t\geq 0$, denote by $G\ast K_t$ the \emph{join} of $G$ and $K_t$, which is the graph obtained from $G$ by adding to its vertex set $t$ new vertices and adding every edge that is incident to at least one of the $t$ new vertices. That is, $G\ast K_t$ has $|V(G)|+t$ vertices and $e(G)+\binom{t}{2}+t|V(G)|$ edges. Similarly if $\mathcal{H}$ is a $k$-uniform hypergraph, denote by $\mathcal{H}\ast K_t$ the $k$-uniform hypergraph obtained by adding $t$ new vertices and all $k$-edges containing at least one of the $t$ new vertices.

Observe that Theorem~\ref{thm:kkdecomp} can be equivalently formulated as follows: if one removes up to $r \le \eps n^2$ edge-disjoint copies of $K_k$ from $K_n$ to obtain a graph $G$, then there exists some $t$ with $t\leq Ck^2\sqrt{r}$ so that $G\ast K_t$ has a $K_k$-decomposition. Moreover, as a set of $r$ orthogonal Latin squares can be viewed as a $K_{r+2}$-decomposition of a complete balanced $r+2$-partite graph, Theorem~\ref{thm:latinort} is the multipartite analogue of Theorem~\ref{thm:kkdecomp}. 

In general, given a property $\PP$ and a graph $G$, we propose to study the minimum positive integer $t$ such that the join $G\ast K_t$ has property $\PP$. We call such $t$ the \emph{deficiency} of the graph $G$ with respect to the property $\PP$. Our previous theorems address the deficiency of graphs with respect to the existence of a $K_k$-decomposition and an extension of this problem to the multi-partite setting. Note that the concept of deficiency appeared before, for example in the Tutte--Berge formula on the characterization of the size of a maximum matching in a graph. Nevertheless the questions we propose here are new and the study of deficiency concept by itself leads to intriguing open problems. We illustrate this with two more examples: the existence of a Hamiltonian cycle and the existence of a triangle-factor.

\subsection{Hamiltonian graphs}

It is clear that an $n$-vertex graph may have as many as $\binom{n}{2}-(n-2)$ edges without having a Hamiltonian cycle.  E.g.,~take a complete graph and remove all but one edge incident to some chosen vertex. This was proved to be tight by Ore~\cite{ore} in 1961. The deficiency variant of this problem exhibits a more interesting behaviour. Given integers $n$ and $m$, what is the smallest integer $f(n,m)$ such that $G\ast K_{f(n,m)}$ has a Hamiltonian cycle for any $n$-vertex graph $G$ with $m$ edges? Equivalently, given that $G\ast K_t$ does not have a Hamiltonian cycle, how many edges can $G$ have?

Here we give a complete answer to this question and also determine the extremal constructions. It appears that there are two natural and competing constructions, both of which are best for some range of $t$.
\begin{theorem}\label{thm:hami}
Let $n$ and $t$ be integers and $G$ an $n$-vertex graph so that $G\ast K_t$ does not have a Hamiltonian cycle. Then we have the following bounds on $e(G)$.
\begin{itemize}
    \item If $n+t$ is even:
    $$
        e(G) \le \binom{n}{2} - \begin{cases}
            t(n-1) - \binom{t}{2} & \text{ if } t \le (n+4)/5 \\
            \binom{\frac{n+t+2}{2}}{2} - 1 & \text{ if } t \ge (n+4)/5.
        \end{cases}
    $$
    
    \item If $n + t$ is odd:
    $$
        e(G) \le \binom{n}{2} - \begin{cases}
            t(n-1) - \binom{t}{2} & \text{ if } t \le (n+1)/5 \\
            \binom{\frac{n+t+1}{2}}{2}  &\text{ if } t \ge (n+1)/5.
        \end{cases}
    $$
\end{itemize}
In the case where $n + t$ is even and $t = (n+4)/5$, or $n+t$ is odd and $t = (n+1)/5$, there are exactly two graphs which achieve equality. In all other cases there is a unique $G$ for which the equality holds.
\end{theorem}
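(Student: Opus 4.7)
My plan is to establish the two extremal constructions first and then match them via an Erd\H{o}s-type extremal result applied to the join $H := G \ast K_t$.

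\emph{Constructions.} The first graph is $G_A := K_{n-t} \cup \overline{K_t}$. In $G_A \ast K_t$ the $t$ isolated vertices of $G_A$ have neighbours only inside the added $K_t$, so removing $V(K_t)$ leaves $t+1$ components and violates $1$-toughness; hence $G_A \ast K_t$ has no Hamiltonian cycle. One checks $e(G_A) = \binom{n-t}{2} = \binom{n}{2} - [t(n-1) - \binom{t}{2}]$. The second graph is $G_B := K_s \ast X$, with $s = \lfloor (n-t-1)/2 \rfloor$ and $X = \overline{K_{n-s}}$ when $n+t$ is odd or $X = \overline{K_{n-s-2}} \cup K_2$ when $n+t$ is even. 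Again removing $V(K_s) \cup V(K_t)$ from $G_B \ast K_t$ creates more components than vertices removed, so $G_B \ast K_t$ is non-Hamiltonian, and $e(G_B)$ matches the corresponding bound in the theorem.

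\emph{Upper bound.} Set $H := G \ast K_t$, $N := n+t$, and note $\delta(H) = \delta(G) + t \ge t$. I would apply Erd\H{o}s's 1962 theorem on non-Hamiltonian graphs of prescribed minimum degree: for any non-Hamiltonian $H$ on $N$ vertices with $\delta(H) \ge d$ and $d \le \lfloor (N-1)/2 \rfloor$,
\[
e(H) \;\le\; \max\left\{\,\binom{N-d}{2} + d^2,\ \binom{N-m}{2} + m^2\,\right\}, \qquad m := \lfloor (N-1)/2 \rfloor.
\]
Taking $d = t$ and subtracting $e(H) - e(G) = \binom{t}{2} + nt$ from both terms (using the identity $\binom{n+t}{2} = \binom{n}{2} + \binom{t}{2} + nt$), the first expression becomes $\binom{n}{2} - [t(n-1) - \binom{t}{2}]$ (Construction~A), and the second becomes $\binom{n}{2} - \binom{(n+t+1)/2}{2}$ or $\binom{n}{2} - \binom{(n+t+2)/2}{2} + 1$ according to the parity of $n+t$ (Construction~B). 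A short algebra check shows that the crossover $t = (n+4)/5$ (resp.\ $t = (n+1)/5$) in the theorem statement is precisely the value at which the two Erd\H{o}s bounds are equal.

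\emph{Uniqueness.} The extremal graphs of Erd\H{o}s's theorem are uniquely, up to isomorphism, the joins $K_d \ast (K_{N-2d} \cup \overline{K_d})$ for $d \in \{t, m\}$. To translate back, note that the added $K_t$ consists of universal vertices of $H$, so it must sit inside the distinguished clique part of the Erd\H{o}s extremal graph; stripping those $t$ vertices recovers exactly $G_A$ or $G_B$. The main technical obstacle I foresee is verifying this rigidity carefully --- in particular, ruling out the possibility that the added $K_t$ overlaps with the non-clique part of the extremal graph, and arguing that in the even-parity case the presence of a single $K_2$ inside the independent part of $G_B$ (versus pure $\overline{K_{n-s}}$) is forced. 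At the two crossover values of $t$, both extremal families attain equality but are non-isomorphic as graphs, which gives exactly the two extremal $G$ claimed in the theorem.
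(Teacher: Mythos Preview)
Your approach is correct and takes a genuinely different, more ``black-box'' route than the paper. The paper works from Chv\'atal's degree-sequence criterion: since $H = G \ast K_t$ is non-Hamiltonian, some index $i$ with $t \le i < (n+t)/2$ fails Chv\'atal's condition, and they bound the number of missing edges from below by $f(i) = i(n+t-1-i) - \binom{i}{2}$, then observe that $f$ is a concave parabola so its minimum on the interval is at an endpoint. You instead invoke Erd\H{o}s's 1962 extremal theorem directly on $H$ with $d = t$; since Erd\H{o}s's theorem is itself a corollary of the P\'osa/Chv\'atal machinery, the two arguments are close cousins, but yours is shorter and avoids re-deriving the parabola bound.

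The real divergence is in the uniqueness analysis. The paper exploits both halves of Chv\'atal's condition (the bound on $d_i$ \emph{and} the bound on $d_{n+t-i}$) to pin down the structure of $G$ directly --- for instance, in the even-parity case they use $d_{u+1} \le d_{u+2} \le (n+t)/2$ to locate the two extra non-universal vertices and force the lone surviving edge inside $S$. Your plan instead relies on knowing the extremal graphs for Erd\H{o}s's theorem (the graphs $K_d \ast (K_{N-2d} \cup \overline{K_d})$ for $d \in \{t, m\}$) and then recovering $G$ by stripping the $t$ universal vertices. This is legitimate, and your observation that the added $K_t$ consists of universal vertices of $H$ and hence must sit inside the $K_d$-part of the Erd\H{o}s extremal graph is exactly right (the $K_{N-2d}$ and $\overline{K_d}$ vertices have degree $N-d-1$ and $d$, respectively, so are never universal when $d \ge 1$). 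The only caveat is that you need to either cite a reference establishing \emph{uniqueness} of the Erd\H{o}s extremal graphs, or prove it; the 1962 paper gives the bound but the characterisation of equality is a separate (though standard) statement. The paper's Chv\'atal-based argument gets uniqueness essentially for free from the equality conditions in the counting, which is its main advantage.
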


\noindent
These upper bounds on the number of edges of $G$ such that $G\ast K_t$ does not contain a Hamilton cycle were implicitly obtained by Skupie\'n~\cite{skupien}.

We remark that the deficiency problem for Hamiltonicity is equivalent to another natural question. Define the path-covering number $\mu(G)$ of a graph $G$ to be the minimum number of vertex-disjoint paths required to cover the vertices of $G$. Let $g(n,k)$ be the minimum integer so that every $n$-vertex graph $G$ with at least $g(n,k)$ edges has $\mu(G)\leq k$. This parameter $g(n,k)$ has been studied before, see e.g.~\cite{pathcover1, pathcover2}. Note that a non-Hamiltonian graph has path-cover number $t$ precisely if $G\ast K_t$ has a Hamiltonian cycle but $G\ast K_{t-1}$ does not. Therefore Theorem~\ref{thm:hami} gives a tight bound on $g(n,k)$ for all values of the parameters together with extremal examples.

\subsection{Triangle factors}\label{sec:trianfac}

Given an integer $k\geq 3$, a $K_k$-factor of a graph $G$ is a collection of vertex-disjoint copies of $K_k$ covering every vertex of $G$. Corr\'adi and Hajnal~\cite{corradi} proved that if a graph on $3n$ vertices has minimum degree $\delta(G)\geq 2n$ then it contains a triangle factor, and a corresponding minimum degree condition for the existence of a $K_k$-factor was determined later by Hajnal and Szemer\'edi~\cite{hajszem}. The deficiency problem for $K_k$-factors asks: given integers $n$ and $m$, what is the smallest integer $f(n,m)$ such that $G\ast K_{f(n,m)}$ has a $K_k$-factor for any $n$-vertex graph $G$ with $m$ edges? Equivalently, given that $G\ast K_t$ does not have a $K_k$-factor and $k|n+t$, how many edges can $G$ have? We give a partial answer to this problem in the case $k=3$. 
\begin{theorem}\label{thm:corrhajdef}
There exists $n_0$ such that the following holds. Let $n \ge n_0$ and $t$ be integers so that $3|n+t$ and let $G$ be an $n$-vertex graph so that $G\ast K_t$ does not have a $K_3$-factor. If $t\leq n/1000$   then 
$$
e(G)\leq \binom{n}{2} - \binom{k}{2} - 
\begin{cases}
    k(n-k), &\text{if } t \text{ is odd,} \\
    k(n-k-1), &\text{if } t \text{ is even,}
\end{cases}
$$
where $k = \lceil (t+1)/2 \rceil$. This bound on $e(G)$ is sharp.
\end{theorem}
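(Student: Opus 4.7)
The plan is to prove the contrapositive: assuming $e(G)$ exceeds the stated upper bound, construct a $K_3$-factor of $H := G \ast K_t$. Write $k := \lceil (t+1)/2 \rceil$ and let $Y$ denote the $t$ apex vertices of $H$. Throughout I will use that $3 \mid n+t$ and $t \le n/1000$.

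The scheme reduces $H$ to a graph satisfying the hypothesis of the Hajnal--Szemer\'edi theorem. First, I would identify a ``bad'' set $B \subseteq V(G)$ of low-$G$-degree vertices and show, using the hypothesis $e(\bar G) < \binom{k}{2} + k(n-k)$ in the odd-$t$ case (respectively $\binom{k}{2} + k(n-k-1)$ in the even-$t$ case), that $|B| \le k-1$ by a careful structural/counting argument. Second, I would absorb $B$ using apex pairs: for each $v \in B$, pick two fresh $y, y' \in Y$ and form the triangle $\{v, y, y'\}$; this consumes $2|B| \le 2(k-1) \le t$ apex vertices. In the even-$t$ case, an additional parity adjustment is needed: replace one such triangle by a $1$-apex-$2$-$G$-triangle using an edge of $G$ between $B$ and $V(G) \setminus B$, whose existence is guaranteed precisely by the extra $+k$ allowance in the even-$t$ hypothesis. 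Third, apply Hajnal--Szemer\'edi to the residual join $H' := (G - B) \ast K_{t - 2|B|}$ (with $G$-side and apex-side trimmed further in the even case): every vertex of $H'$ has degree at least $2|V(H')|/3$, the vertex count is divisible by $3$, and Hajnal--Szemer\'edi thus yields a $K_3$-factor of $H'$, which combines with the triangles of step two to give a $K_3$-factor of $H$.

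For tightness, two explicit extremal constructions realize equality. When $t$ is odd, take $G := K_{n-k} \sqcup \overline{K}_k$; the $k$ isolated vertices jointly require $2k = t+1 > t$ apex vertices for their covering triangles. When $t$ is even, take $G$ to be $K_{n-k}$ together with an independent set $A$ of size $k$ and a single common neighbor $u^* \in V(K_{n-k})$ of all of $A$ (with no other edges between $A$ and the rest); now only one $A$-vertex can reuse $u^*$ in a $1$-apex-triangle while the remaining $k-1$ vertices need $2$ apex each, for a total of $2(k-1)+1 = t+1 > t$ apex. A direct edge count for these graphs recovers the stated bounds exactly.

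The main obstacle is the first step: a naive degree cutoff such as $\deg_G(v) < (2n - t)/3$ allows $|B|$ as large as $\Theta(k)$ under the given edge budget, rather than $k-1$. The actual argument must therefore extract $B$ by a more refined procedure---essentially a Tutte--Berge-type analysis of a maximum $K_3$-packing of $H$, showing that any obstruction to a $K_3$-factor localizes on a set of at most $k$ highly defective $G$-vertices whose non-edges must then account for at least $\binom{k}{2} + k(n-k)$ (or $\binom{k}{2} + k(n-k-1)$) edges of $\bar G$. The parity case split emerges naturally from whether the last apex-pair is complete (odd $t$) or one of them must be replaced by a mixed triangle (even $t$); the condition $t \le n/1000$ enters by providing the slack needed both for the residual minimum-degree estimate in step three and for the existence of the parity-adjusting edge in step two.
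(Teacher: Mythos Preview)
Your sharpness constructions are correct and coincide with the paper's. The proof scheme, however, has a genuine gap that you yourself flag but do not close: the bound $|B|\le k-1$ cannot be obtained by a degree threshold, and the suggested ``Tutte--Berge-type analysis'' is not a substitute. Concretely, with at most $\binom{k}{2}+k(n-k)\approx kn$ missing edges, a threshold low enough to force $|B|\le k-1$ (each bad vertex must then miss close to $n$ edges) leaves behind vertices of degree, say, $n/10$ that are \emph{not} in $B$; such a vertex has degree roughly $n/10+t$ in your residual $H'$, far below $2|V(H')|/3$, so Hajnal--Szemer\'edi does not apply. Conversely, any threshold high enough to make the residual minimum degree work allows $|B|$ of order $\Theta(k)$ or larger, and then $2|B|>t$, so your apex-pair absorption fails.

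The paper resolves this with two additional ideas that your outline is missing. First, it does \emph{not} try to bound the low-degree set $L$ by $k-1$; it only shows $|L|=O(n)$ and instead bounds the \emph{number of apex vertices needed} to cover $L$. This is done by decomposing $L$ via matchings: a maximum matching inside $L$ (size $m_1/2$), then a maximum matching from the rest of $L$ into $V(G)\setminus L$ (size $m_2$), with $m_3$ leftover isolated vertices. The apex cost is $m_1/2+m_2+2m_3$, and a K\H{o}nig-theorem edge count shows this is at most $t$ under the hypothesis on $e(G)$; the odd/even parity split emerges exactly here. Second, a separate \emph{intermediate} layer $I$ of vertices with degree between $0.1n$ and $0.9n$ is covered greedily by triangles lying entirely in $G$ (no apex used at all), exploiting that each such vertex still has enough neighbours to find an edge among them. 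Only after both layers are removed does the remainder have minimum degree above $2/3$ of its order, whence Corr\'adi--Hajnal applies. Your single-threshold, apex-pairs-only plan conflates these two layers and therefore cannot succeed as stated.
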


\noindent
The range of values of $t$ for which this theorem holds can be easily extended using our proof, but we did not optimize it for the sake of clarity of presentation.

\subsection{Organization of the paper} The rest of this paper is organized as follows. We give the proof of Theorem~\ref{thm:kkdecomp} in Section~\ref{sec:designs}. 
Our main result on Latin squares, Theorem~\ref{thm:latinort}, is proved in Section~\ref{sec:latin}. We discuss the deficiency of the Hamiltonicity property and prove Theorem~\ref{thm:hami} in Section~\ref{sec:hami}, and continue on with the deficiency of $K_3$-factors and the proof of Theorem~\ref{thm:corrhajdef} in Section~\ref{sec:corrhaj}.  We finish with some concluding remarks and suggest future directions of research in Section~\ref{sec:conc}. Whenever rounding is not crucial, we omit it for the sake of brevity.


\section{Completing Steiner triple systems and other block designs}\label{sec:designs}

In this section we prove Theorem~\ref{thm:kkdecomp}. We start with a lemma which may be of independent interest. Hajnal and Szemer\'edi~\cite{hajszem} proved that every graph $G$ with  $sk$ vertices and minimum degree $\delta(G) \geq s(k - 1)$ contains a $K_k$-factor, i.e.~a collection of vertex-disjoint $k$-cliques covering every vertex, and this is optimal. In Section \ref{sec:corrhaj} we discuss the deficiency version of this result: if instead of the minimum degree of $G$ we only know its number of edges, how many full degree vertices one needs to add to guarantee the existence of a $K_k$-factor? Our first lemma is similar in spirit to the above problem, however instead of adding full degree vertices we add vertices of very large degree. 

\begin{lemma}\label{lem:defHajSze}
Let $k \ge 3$ and $r\geq 1$ be integers and let $G$ be a graph on vertex set $S~\dot\cup~ T$ with $|T|\geq 5k^2 \sqrt{r}$, such that the degree of every vertex satisfies $d(v)\geq |V(G)|-k\sqrt{r}$. Then no matter how one removes at most $k^2 r$ edges from $G[S]$, the resulting graph contains a collection of vertex-disjoint $k$-cliques covering every vertex of $G$, except possibly up to $k-1$ vertices that lie in $T$.
\end{lemma}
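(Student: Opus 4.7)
The plan is to cover the ``bad'' vertices of the post-removal graph $G'$ with a small number of vertex-disjoint $K_k$'s built from $T$-vertices, then invoke the Hajnal--Szemer\'edi theorem on the dense remainder. Write $G'$ for the graph after the adversary removes at most $k^2 r$ edges from $G[S]$, and call $v \in S$ \emph{bad} if at least $k\sqrt{r}$ removed edges are incident to $v$; since the removed edges contribute at most $2k^2 r$ endpoints in total, the set $B$ of bad vertices satisfies $|B| \le 2k\sqrt{r}$. Every non-bad vertex (including all of $T$) then has degree $d_{G'}(v) \ge |V(G)| - 2k\sqrt{r}$, and \emph{every} vertex retains $|N_{G'}(v) \cap T| \ge |T| - k\sqrt{r}$, because the removed edges all lie in $G[S]$ and thus do not touch $T$.

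Next I would greedily cover $B$ with vertex-disjoint copies of $K_k$, one per bad vertex, completing each clique using $k-1$ fresh vertices of $T$. When processing $v \in B$, the available $T$-neighbours of $v$ in $G'$ number at least
$$
    |T| - k\sqrt{r} - (k-1)|B| \ge 5k^2\sqrt{r} - k\sqrt{r} - 2k(k-1)\sqrt{r} \ge 2k^2\sqrt{r},
$$
and choosing $t_1,\dots,t_{k-1}$ inside this pool greedily produces a $K_k$ with $v$: each previously chosen $t_j$ excludes at most $k\sqrt{r}$ candidates (its non-neighbours inside $T$), which is comfortably less than the pool size. Having built these $|B|$ cliques I set aside $m \in \{0,1,\dots,k-1\}$ additional vertices of $T$ so that the remaining vertex set $W$ satisfies $k \mid |W|$. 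Since $|V(G)\setminus W| \le 2k^2\sqrt{r} + (k-1)$ and each $w \in W$ has
$$
    d_{G'[W]}(w) \ge |V(G)| - 2k\sqrt{r} - |V(G)\setminus W|,
$$
a short rearrangement using $|V(G)| \ge |T| \ge 5k^2\sqrt{r}$ gives $\delta(G'[W]) \ge \tfrac{k-1}{k}|W|$. The Hajnal--Szemer\'edi theorem then supplies a $K_k$-factor of $G'[W]$, which combined with the earlier cliques covers all of $V(G)$ except the $m \le k-1$ set-aside vertices in $T$, as required.

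The main obstacle is calibrating the threshold defining ``bad'': it must be small enough that non-bad vertices still satisfy the Hajnal--Szemer\'edi degree condition after removing the covering cliques, yet large enough that $|B|$ does not exhaust $T$ when we spend $k-1$ vertices of $T$ per bad vertex. The threshold $k\sqrt{r}$, yielding $|B| \le 2k\sqrt{r}$ and an expenditure of at most $2k^2\sqrt{r}$ vertices of $T$ in the covering step, is what exactly fits the budget $|T| \ge 5k^2\sqrt{r}$; a cruder choice such as $\sqrt{r}$ would produce up to $2k^2\sqrt{r}$ bad vertices and demand $|T| = \Omega(k^3\sqrt{r})$, which the hypothesis does not provide.
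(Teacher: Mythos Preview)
Your proposal is correct and follows essentially the same approach as the paper: isolate the ``bad'' vertices of $S$ that are incident to many removed edges, absorb each bad vertex into a $K_k$ using $k-1$ fresh vertices of $T$, then apply Hajnal--Szemer\'edi to the dense remainder. The only cosmetic differences are that the paper sets the bad-vertex threshold at $2k\sqrt{r}$ (yielding $|B|\le k\sqrt{r}$) rather than your $k\sqrt{r}$ (yielding $|B|\le 2k\sqrt{r}$), and the paper cites Tur\'an's theorem to locate the $K_{k-1}$ inside $N(b_i)\cap T$ where you argue greedily; both choices work within the budget $|T|\ge 5k^2\sqrt{r}$.
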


Observe that the bound on $T$ cannot be significantly improved: let $|S| = k \sqrt{r}$ and $|T|<k(k-1) \sqrt{r}$, and suppose $G$ is a complete graph on vertex set $S~\dot\cup~T$. Then removing all edges from $S$ creates a graph without a collection of vertex-disjoint $K_k$'s covering every vertex of $S$.

\begin{proof}[Proof of Lemma~\ref{lem:defHajSze}]
Let $R$ be the set of at most $k^2 r$ edges we deleted from $S$ and set $G_R = G \setminus R$. Let $B \subseteq S$ be the set of vertices incident to at least $2k \sqrt{r}$ edges in $R$. Note that $|B|\leq k\sqrt{r}$. We will first consider the vertices in $B$ one by one, and for each $b\in B$ we will find a disjoint set of $k-1$ vertices in $N_{G_R}(b)\cap T = N_{G}(b) \cap T$ that form a copy of $K_{k-1}$ (recall that $R$ contains only edges which completely lie in $S$).

Let $B=\{b_1,b_2,\ldots,b_{|B|}\}$ and suppose that for each $j\leq i$ we have found a set $B_j\subset T\cap N_G(b_j)$ of $k-1$ vertices so that $G[B_j]\cong K_{k-1}$ and $B_{j_1}\cap B_{j_2}=\emptyset$ for each $1\leq j_1<j_2\leq i$.  Set $T_{i}:=T\setminus\left\{\bigcup_{j\leq i} B_j\right\}$. From $d_G(b_{i+1})\geq |V(G)|-k\sqrt{r}$ we get 
$$
	|N_G(b_{i+1})\cap T_i|\geq |T_i|-k\sqrt{r} \geq |T| - |B|(k-1) - k\sqrt{r} > 3k^2\sqrt{r}.
$$ 
Let $H_i=G[N(b_{i+1})\cap T_i]$ and note that every vertex $v\in H_i$ satisfies 
$$
	d_{H_i}(v)\geq |N(b_{i+1})\cap T_i|- k\sqrt{r} > \left(1-\frac{1}{k-1}\right)|V(H)|.
$$
Therefore, by Tur\'an's theorem $H_i$ contains a copy of $K_{k-1}$. Let $B_{i+1}$ be the vertex set of such a copy. Continuing this process, we have found a collection of vertex-disjoint $K_k$'s covering every vertex of $B$.

It remains to find a collection of disjoint $K_k$'s which cover all the vertices in $S' = S \setminus B$ and all but at most $k - 1$ in $T_{|B|}$. To this end, let $T' \subseteq T_{|B|}$ be an arbitrary subset such that $|S' \cup T'|$ is divisible by $k$ and $|T_{|B|} \setminus T'| \le k - 1$. We will show that the graph $H':=G[S' \cup T']$ contains a $K_k$-factor. Having the Hajnal--Szemer\'edi theorem in mind, stated in the beginning of this section, the only thing we need to show is that $H'$ has sufficiently large minimum degree. Consider some vertex $v \in H'$. As $v \notin B$ we have that it is adjacent to at most $2 k \sqrt{r}$ edges in $R$. Together with the assumption that the initial degree of $v$ (in $G$) was at least $|V(G)| - k \sqrt{r}$, this implies
$$
	d_{H'}(v)\geq |V(H')| - k \sqrt{r} - 2k\sqrt{r} - |T_{|B|} \setminus T'| \geq \left(1-\frac{1}{k}\right)|V(H')|.
$$
In the last inequality we used that 
$$
	|V(H')| \ge |T'| \geq |T|- (|B| + 1)(k-1) > 4k^2\sqrt{r}.
$$
Hence by the Hajnal--Szemer\'edi theorem $H'$ has a $K_k$-factor. Finally, this $K_k$-factor together with the copies of $K_k$ given by $B_i\cup\{b_i\}$ for $1\leq i \leq |B|$, forms a desired collection of vertex-disjoint $K_k$'s.
\end{proof}

To prove Theorem~\ref{thm:kkdecomp} we also need the following result of Gustavsson~\cite{gustavsson}:

\begin{theorem}\label{thm:gust}
For every integer $k\geq 3$ there exists an integer $n_0 > 0$ and a positive constant $\gamma$ such that every graph $G$ with $n\geq n_0$ vertices and minimum degree $\delta(G)\geq (1-\gamma) n$, satisfying $\binom{k}{2}|e(G)$ and $k-1|d(v)$ for every $v \in G$, has a $K_k$-decomposition.
\end{theorem}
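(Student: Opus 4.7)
The plan is to prove Gustavsson's theorem by combining the R\"odl nibble with the \emph{absorption method}. At a high level, I would reserve a carefully chosen small subgraph $A \subseteq G$ (the \emph{absorber}) with the property that for every sufficiently small leftover graph $L$ on $V(G)$, edge-disjoint from $A$ and satisfying appropriate divisibility conditions, the graph $A \cup L$ admits a $K_k$-decomposition. One then nearly decomposes $G \setminus A$ into copies of $K_k$ by the nibble, leaves only a tiny leftover $L$, and finishes by decomposing $A \cup L$ using the absorbing property. The divisibility hypotheses $\binom{k}{2} \mid e(G)$ and $(k-1) \mid d(v)$ propagate through the process and will be essential both to maintain $\binom{k}{2} \mid e(A \cup L)$ and $(k-1) \mid d_{A \cup L}(v)$, and to guarantee that the absorber actually \emph{can} absorb $L$.

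The nibble step is the routine half. Consider the auxiliary $\binom{k}{2}$-uniform hypergraph $\HH$ whose vertex set is $E(G \setminus A)$ and whose hyperedges are the edge sets of $K_k$-copies in $G \setminus A$. The assumption $\delta(G) \ge (1-\gamma)n$ implies that every edge of $G\setminus A$ lies in $(1 + o(1))\binom{n-2}{k-2}$ copies of $K_k$, while any two distinct edges lie together in $O(n^{k-3})$ copies. Thus $\HH$ is almost-regular with small codegree, and the Pippenger--Spencer theorem produces a near-perfect matching in $\HH$, i.e.\ a $K_k$-packing of $G \setminus A$ whose leftover $L$ has only $o(n^2)$ edges.

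The crux is constructing $A$. The modern approach uses \emph{clique transformers}: for each admissible ``defect'' (a small edge set whose vector of edge-multiplicities is consistent with the divisibility conditions), one builds a bounded-size gadget admitting two distinct $K_k$-decompositions whose symmetric difference equals that defect together with a fixed canonical piece. Invoking Wilson's theorem to obtain $K_k$-decompositions of particular small dense structures, one assembles a finite library of elementary such gadgets whose defects span all divisibility-consistent edge sets. The density of $G$ then allows a greedy or random procedure to embed many vertex-disjoint copies of each library gadget into $V(G)$; the union of these embeddings, plus a small random residual, forms $A$. The divisibility conditions on $e(G)$ and $d_G(v)$ guarantee that any leftover $L$ from the nibble decomposes as a sum of elementary defects, each absorbed by one gadget.

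The main obstacle is the gadget construction and its compatibility with divisibility. One must (i) exhibit, for every divisibility-consistent ``correction'' edge set, a small graph with two $K_k$-decompositions differing exactly on that set; (ii) pack many vertex-disjoint copies of each gadget into $G$ so that the total number of each type is controlled; and (iii) ensure that removing $E(A)$ from $G$ does not violate $\binom{k}{2} \mid e(G \setminus A)$ or $(k-1) \mid d_{G \setminus A}(v)$. Step (i) is the algebraic heart of Gustavsson's work and is handled by explicit design-theoretic constructions; steps (ii) and (iii) require a balancing argument where gadgets are grouped into families whose net effect on degrees modulo $k-1$ and on edge count modulo $\binom{k}{2}$ is prescribed in advance. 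Once the absorber is in place, the nibble-plus-absorb framework closes the argument.
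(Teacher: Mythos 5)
The paper does not prove this statement at all: it is quoted verbatim as a black box, attributed to Gustavsson \cite{gustavsson}, with a remark that the currently best constants $\gamma$ are due to Dross ($k=3$) and Montgomery ($k \ge 4$). So there is no internal argument to compare against; what you have written is a sketch of an independent proof.

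Your outline is recognisably the modern absorption approach of Barber, K\"uhn, Lo and Osthus (and its refinement by Glock, K\"uhn, Lo and Osthus), which is quite different from Gustavsson's original, long, non-absorptive argument. The nibble half is correct and routine. However, the absorption half as you describe it has a real gap: you propose to reserve a single absorber $A$ of bounded (or at most $o(n^2)$) size so that $A \cup L$ is $K_k$-decomposable for \emph{every} divisibility-consistent small leftover $L$ on the whole vertex set $V(G)$. The set of possible leftovers $L$ of size $o(n^2)$ on $n$ vertices is far too rich for any such fixed $A$ to absorb, and assembling a ``library'' of gadget copies by a greedy or random embedding does not escape this, because the gadgets would need to cover essentially every pair of vertices of $G$ in many independent ways. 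The standard remedy, and the step you are missing, is \emph{iterative} absorption via a vortex: fix a nested sequence $V(G)=U_0 \supset U_1 \supset \cdots \supset U_\ell$ with $|U_\ell|=O(1)$, run the nibble in rounds so that after each round the leftover is pushed down into the next $U_i$ (using a ``cover-down'' lemma, which itself relies on the minimum-degree hypothesis and approximate decompositions of auxiliary bipartite/multipartite structures), and only construct the absorber to handle leftovers supported on the constant-size set $U_\ell$. With only $O(1)$ vertices in play, there are finitely many divisibility-consistent leftovers and your gadget-library idea becomes sound. Without the vortex iteration the plan does not close, so as written the proposal is incomplete; it would become a correct high-level proof sketch once the cover-down/vortex mechanism is added and the divisibility bookkeeping is tied to it.
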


\noindent
It is worth mentioning that currently best known bounds on the value of $\gamma$ in Theorem \ref{thm:gust} are due to Dross~\cite{dross2016fractional} ($k = 3$) and Montgomery~\cite{montgomery2017fractional} ($k \ge 4$) together with result from \cite{GKLMO}, which implies that the decomposition threshold for cliques equals its fractional relaxation. As $\gamma$ only has an impact on the value of $\eps$ in Theorem \ref{thm:kkdecomp}, either of these theorems serve our purpose.

The following lemma directly implies Theorem~\ref{thm:kkdecomp}.

\begin{lemma}\label{lem:kkdecomp}
For every integer $k\geq 3$ there exist $\varepsilon, n_0 > 0$ such that the following holds. Let $G$ be a graph obtained from the complete graph on $n \ge n_0$ vertices by deleting $r \le \eps n^2$ edge-disjoint copies of $K_k$. Then there exists some $t \leq 7k^{2}\sqrt{r}$ such that $G\ast K_t$ has a $K_k$-decomposition. 
\end{lemma}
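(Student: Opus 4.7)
My plan is to reduce Lemma~\ref{lem:kkdecomp} to Gustavsson's theorem (Theorem~\ref{thm:gust}), using Lemma~\ref{lem:defHajSze} as the main tool to fix the high-defect vertices of $G$.

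\emph{Step 1: choice of $t$.} First I would pick $t\in[5k^2\sqrt{r},\,7k^2\sqrt{r}]$ satisfying $k\mid n+t$ and $(k-1)\mid n+t-1$. Since $\gcd(k,k-1)=1$, by the Chinese Remainder Theorem the set of such $t$ is an arithmetic progression of common difference $k(k-1)$, and so the chosen interval contains one for $r$ large enough. These two congruences are precisely what is needed to guarantee that every vertex of $G\ast K_t$ has degree divisible by $k-1$ (since each deleted $K_k$ reduces $d(v)$ by a multiple of $k-1$) and that $\binom{k}{2}\mid e(G\ast K_t)$, matching the divisibility hypotheses of Theorem~\ref{thm:gust}; both are preserved under further removal of $K_k$-copies.

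\emph{Step 2: isolating high-defect vertices.} For $v\in V(G)$ let $d_R(v)$ be the number of deleted $K_k$-copies containing $v$; since $\sum_v d_R(v)=kr$, the set $B=\{v:d_R(v)\ge M\sqrt{r}\}$ has $|B|\le k\sqrt{r}/M$ for any constant $M$. Processing the vertices of $B$ one at a time, for each $v\in B$ I would apply Lemma~\ref{lem:defHajSze} (with its parameter $k$ replaced by $k-1$ and $r$ replaced by a sufficiently large constant multiple $r'$) to the subgraph of the current graph induced on $N(v)$, with bipartition $N(v)\cap V(G)$ versus $N(v)\cap T$ (where $T$ denotes the $t$ new vertices). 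This yields a $K_{k-1}$-factor of $N(v)$ missing at most $k-2$ vertices of $T$; attaching $v$ to each of the resulting $K_{k-1}$-cliques gives $K_k$-copies through $v$ that cover all but at most $k-2$ edges at $v$. A direct greedy construction inside $T$ (easy since $|T|\gg|B|k$) handles the residual edges at $v$, so after this step every vertex of $B$ is isolated.

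\emph{Step 3: applying Gustavsson.} Let $H$ be obtained from $G\ast K_t$ by removing the edges of $\PP$ (the union of all $K_k$-copies produced in Step~2) and discarding the isolated vertices of $B$. Each $u\notin B$ loses at most $(k-1)|B|$ edges during Step~2 (for each bad $v$, $u$ is in at most one $K_{k-1}$-clique of the factor of $N(v)$, contributing $k-2$ internal edges, plus possibly the edge $\{u,v\}$), and was originally missing at most $(k-1)M\sqrt{r}$ edges in $G\ast K_t$. Thus $d_H(u)\ge n+t-1-(k-1)(M+k/M)\sqrt{r}$, which with $M=\sqrt{k}$ and $t\ge 5k^2\sqrt{r}$ gives $\delta(H)\ge(1-\gamma)|V(H)|$ provided $\eps$ is small in terms of $k$ and Gustavsson's constant $\gamma$. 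Since divisibility is preserved, Theorem~\ref{thm:gust} gives a $K_k$-decomposition $\mathcal{D}$ of $H$, and $\PP\cup\mathcal{D}$ is the desired $K_k$-decomposition of $G\ast K_t$.

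\emph{Main obstacle.} The delicate point is Step~2: one has to verify that Lemma~\ref{lem:defHajSze} applies to each neighborhood $N(v)$ despite the perturbations from earlier iterations. The bookkeeping requires tracking the missing edges in $N(v)$ coming both from the originally deleted $K_k$'s (at most $r\binom{k}{2}\le k^2 r$) and from edges removed in processing prior bad vertices (cumulative per-vertex loss at most $(k-1)|B|=O(k^2\sqrt{r}/M)$). Choosing $r'$ a sufficiently large constant multiple of $r$ absorbs all such losses, and the minimum-degree hypothesis of Lemma~\ref{lem:defHajSze} is satisfied because $(k-1)|B|\ll(k-1)\sqrt{r'}$.
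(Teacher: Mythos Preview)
Your overall strategy---choose $t$ for divisibility, isolate the high-defect vertices one at a time by applying Lemma~\ref{lem:defHajSze} to each of their neighborhoods, then finish with Gustavsson---is exactly the paper's approach. There is, however, a genuine gap in Step~2.

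The claim that ``a direct greedy construction inside $T$ handles the residual edges at $v$'' cannot work. After you attach $v$ to every $K_{k-1}$ in the near-factor of $N(v)$, the only edges at $v$ still uncovered are the $j\le k-2$ edges $\{v,w_1\},\ldots,\{v,w_j\}$ to the leftover vertices. To place $\{v,w_1\}$ into a further copy of $K_k$ you would need $k-2$ additional vertices each joined to $v$ by a still-unused edge; but every edge at $v$ other than those to $w_1,\ldots,w_j$ has just been consumed. So the only candidates are $w_2,\ldots,w_j$, and there are at most $k-3$ of them. No greedy argument in $T$ can manufacture extra unused edges at $v$, regardless of how large $T$ is, and if you leave these edges uncovered then $v$ has degree $\le k-2$ in the graph passed to Gustavsson.

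The paper closes this gap not by greed but by divisibility: since $(k-1)\mid n+t-1$ and every edge at $b_{i+1}$ removed so far (whether originally or in an earlier iteration) was removed as part of a $K_k$ through $b_{i+1}$, the current degree $|N_i(b_{i+1})|$ is always a multiple of $k-1$. Hence Lemma~\ref{lem:defHajSze} in fact yields a \emph{perfect} $K_{k-1}$-factor of the neighborhood, and there are no residual edges at all.

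A secondary bookkeeping issue: with $M=\sqrt{k}$ you get $|B|\le\sqrt{kr}$, which forces $r'\ge kr$ to meet the degree hypothesis of Lemma~\ref{lem:defHajSze}; but then the size requirement $|T\cap N(v)|\ge 5(k-1)^2\sqrt{r'}$ already exceeds your budget $t\ge 5k^2\sqrt{r}$ for $k\ge 4$. Taking $M$ of order $k$ (so $|B|\le\sqrt{r}$, as in the paper) lets you use $r'=r$ and the numerics go through.
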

\begin{proof}


Let $j \ge 0$ be the smallest integer such that $n+t$ is divisible by $k$ and $n+t-1$ is divisible by $k-1$, where $t = 6k^2 \sqrt{r} + j$. It follows from the Chinese Remainder Theorem that $j \le k^2$, thus $t \le 7 k^2 \sqrt{r}$. We show that $G' = G \ast K_t$ has a $K_k$-decomposition. For the rest of the proof let $W$ denote the set of $t$ vertices added to $G$ (corresponding to $K_t$) and $R$ the set of edges corresponding to deleted copies of $K_k$.

Let $B$ be the set of vertices in $G$ with degree less than $n - k^2 \sqrt{r}$. As $|R| = r \binom{k}{2}$, we have $|B| \le \sqrt{r}$. By adding arbitrary set of vertices from $G$ to $B$, we can assume that $|B| = \sqrt{r}$. Our first aim is to find a small collection of edge-disjoint copies of $K_k$ in $G'$ that cover every edge incident to at least one vertex of $B$. Removing these $K_k$'s and the vertices in $B$, we will show that the resulting graph has a very high minimum degree and hence by Gustavsson's theorem (Theorem~\ref{thm:gust}) it has a $K_k$-decomposition. Overall, this gives us a $K_k$-decomposition of $G'$.

We build the collection of edge-disjoint copies of $K_k$ in $G'$ that cover every edge incident to at least one vertex of $B$ iteratively, considering the vertices of $B$ one by one. For that, let $B=\{b_1,b_2,\ldots,b_{|B|}\}$ be an arbitrary ordering of the vertices of $B$ and suppose we have defined the collections $\mathcal{S}_1,\mathcal{S}_2,\ldots,\mathcal{S}_i$ of distinct copies of $K_k$ in $G'$, for some $i < |B|$, such that the following holds:
\begin{enumerate}[(i)]
    \item\label{item:disj} for every $j_1, j_2 \in \{1, \ldots, i\}$, if $F_1 \in \mathcal{S}_{j_1}$ and $F_2 \in \mathcal{S}_{j_2}$ with $F_1\neq F_2$ then $F_1$ and $F_2$ are edge-disjoint, and
    \item\label{item:isol} for every $1\leq j\leq i$, every edge in $G'$ incident to $b_j$ belongs to some copy of $K_k$ in $\mathcal{S}_j$ and every copy of $K_k$ in $\mathcal{S}_j$ contains $b_j$.
\end{enumerate}
Set $G_i$ to be the graph obtained from $G'$ by deleting all edges in the $K_k$'s from $\bigcup_{j\leq i}\mathcal{S}_j$. By~(\ref{item:isol}) we have that the vertices $b_1,b_2,\ldots,b_i$ are isolated in $G_i$. By~(\ref{item:disj}) and (\ref{item:isol}) we have that, for every $1\leq j \leq i$, every vertex $v\neq b_j$ occurs in at most one of the $K_k$'s in $\mathcal{S}_j$. Hence by forming $G_i$ from $G'$, the degree of every vertex not in $\{b_1,b_2,\ldots,b_i\}$ reduced by at most 
\begin{equation} \label{eq:degree_red}
	(k - 1) i \le (k - 1)(|B| - 1) \leq (k - 1)(\sqrt{r} - 1) \le (k - 1)\sqrt{r} - 1.
\end{equation}
As the vertex $b_{i+1}$ was connected to every vertex in $W$ (in $G'$), letting $N_i(v)$ denote the neighborhood of a vertex $v$ in $G_i$ we have 
$$
	|N_i(b_{i+1})\cap W|\geq |W|- (k-1)\sqrt{r} + 1 \ge 5 k^2 \sqrt{r}.
$$ 
Set $S_i:=N_i(b_{i+1})\setminus W$ and $T_i:=N_i(b_{i+1})\cap W$. Let $H_i$ be the graph on the vertex set $S_i \cup T_i$ obtained from $G'[S_i \cup T_i]$ by adding back the edges from $R$. By \eqref{eq:degree_red}, every vertex $v \in H_i$ has degree at least 
$$
	\deg_{H_i}(v) \ge |V(H_i)|-1- (k-1) \sqrt{r} + 1 = |V(H_i)|- (k-1)\sqrt{r}.
$$ 
Hence we may remove again edges from $R$ and apply Lemma~\ref{lem:defHajSze}, with $k$ playing the role of $k-1$, to find a collection $\F_i$ of vertex-disjoint $K_{k-1}$'s in $H_i$ covering every vertex of $S_i \cup T_i$ except possibly up to $k-2$ vertices that lie in $T_i$. Note that we can indeed do that as $|R| = r \binom{k}{2} < (k-1)^2r$. Moreover, as for every $w \in V(G') \setminus (S_i \cup T_i)$ we have that the edge $b_{i+1} w$ belongs to a copy of $K_k$ from a collections of edge-disjoint $K_k$'s, from the fact that $k - 1$ divides $|V(G')| - 1$ we conclude that $|S_i \cup T_i|$ is also divisible by $k-1$. Thus $\F_i$ in fact covers every vertex in $S_i \cup T_i$. The copies of $K_{k-1}$ from $\F_i$ together with $b_{i+1}$ form a collection $\mathcal{S}_{i+1}$ of copies of edge-disjoint $K_k$'s covering every edge incident to $b_{i+1}$ in $G_i$, as desired.

Next, let $G^\ast$ be the graph obtained from $G'$ by deleting all edges in the $K_k$'s from $\bigcup_{j\leq |B|}\mathcal{S}_j$, as well as the (now isolated) vertices $b_1,b_2,\ldots,b_{|B|}$. By forming $G^\ast$ from $G'$, the degree of every vertex not in $\{b_1,b_2,\ldots,b_{|B|}\}$ reduced by at most $k \sqrt{r}$ (see \eqref{eq:degree_red}) and hence, by the definition of $B$, the minimum degree of $G^\ast$ is at least 
$$
	\delta(G^\ast) \ge |V(G^\ast)| - k^2 \sqrt{r} - k\sqrt{r} \ge |V(G^\ast)| - 2 k^2 \sqrt{r}.
$$
As $r \le \eps n^2$ and $|V(G^\ast)| = n + t - |B| > n$, for $\eps$ sufficiently small compared to $k$ we have $2k^2 \sqrt{r} < \gamma |V(G^\ast)|$, thus Gustavsson's theorem implies a $K_k$-decomposition of $G^\ast$. Indeed, by our choice of $t$ we have that $(k-1) | (n+t-1)$ and ${k \choose 2} | {n+t \choose 2}$. Since $G^\ast$ is obtained from $K_{n+t}$ by removing edge disjoint $K_k$'s, its degrees are still divisible by $k-1$ and its number of edges is divisible by ${k \choose 2}$.
The decomposition of $G^\ast$, together with the $K_k$'s from $\mathcal{S}_1,\ldots \mathcal{S}_{|B|}$, forms a full $K_k$-decomposition of $G'$.
\end{proof}

We now present a construction showing that Lemma~\ref{lem:kkdecomp} is tight for $r > 2n$  up to a multiplicative constant, which also translates to sharpness of Theorem \ref{thm:steiner} and Theorem \ref{thm:kkdecomp}. To remind the reader, a linear lower bound on $r$ is not a coincidence: for $k = 3$ and $r < n/2$, Horsley~\cite{horsley2014embedding} conjectured that any partial STS of order $n$ has an embedding of the same order, and it is natural to believe that a similar result should hold for partial $(n,k)$-designs as well. Let $k \ge 3$ and $2n < r \le 4n^2/k^2$. We define copies of $K_k$ to be deleted as follows: Fix a vertex $v$ and a subset $V'$ of size $k \sqrt{r} / 2$ (the upper bound on $r$ tells us that $|V'| \leq n$, thus we can indeed choose such $V'$). Take a family of $K_k$'s given by an arbitrary $K_k$-decomposition of $K_n[V']$ together with copies of $K_k$ obtained by joining $v$ to the copies of $K_{k-1}$ of an arbitrary $K_{k-1}$-factor of $K_n \setminus (V' \cup \{v\})$. All these copies of $K_k$ are clearly edge-disjoint and there are less than $r/2 + n \le r$ of them. Let $G$ be the graph obtained by removing these copies. Then $V'$ is an independent set in $G$ and $v$ is not connected to any vertex outside of $V'$. In particular, for every $w \in V'$, in order to cover the edge $\{v, w\}$ by a copy of $K_k$ one needs to add  $k-2$ new vertices. Moreover, all these sets of new vertices have to be disjoint in order to keep the copies of $K_k$'s edge-disjoint, resulting in $(k-2) |V'| = k(k-2)\sqrt{r} / 2$ new vertices. 

As remarked earlier, we cannot always guarantee that a partial $(n,k)$-design $\F$ always has an embedding of the same order. The next corollary of Lemma \ref{lem:kkdecomp} shows that we can find a partial $(n,k)$-design $\F'$ over the same set such that $\F \subseteq \F'$ and $\F'$ is close to being a complete $(n, k)$-design.

\begin{corollary}
For every $k \ge 3$ there exists $\eps, n_0 > 0$ such that the following holds. Given a partial $(n, k)$-design $\F$ of order $n \ge n_0$ with $|\F| \le \eps n^2$ blocks, there exists a partial $(n, k)$-design $\F'$ over the same set such that $\F \subseteq \F'$ and $\F'$ covers all but at most $21 k^3 \sqrt{|\F|} n$ pairs of vertices.
\end{corollary}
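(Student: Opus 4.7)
The plan is to invoke Lemma \ref{lem:kkdecomp} to embed $\F$ into a $K_k$-decomposition of $G \ast K_t$, where $G$ is the complement of $\F$ inside $K_n$, and then take as $\F'$ the union of $\F$ together with those $K_k$'s of the decomposition whose vertex set avoids the added vertices. More precisely, write $r = |\F|$ and let $V$ be the ground set of $\F$. Each block of $\F$ corresponds to an edge-disjoint copy of $K_k$ in $K_n$ on $V$. Let $G$ be $K_n$ with these $r$ copies removed. Provided $\eps$ is at most the $\eps$ supplied by Lemma \ref{lem:kkdecomp}, there is an integer $t \le 7 k^2 \sqrt{r}$ and a $K_k$-decomposition $\mathcal{D}$ of $G \ast K_t$. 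Denote the $t$ added vertices by $W$, let $\mathcal{D}_V = \{T \in \mathcal{D} : V(T) \subseteq V\}$, and set $\F' = \F \cup \mathcal{D}_V$.

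Since $\mathcal{D}$ decomposes the edges of $G \ast K_t$, the members of $\mathcal{D}_V$ are pairwise edge-disjoint and they only cover edges of $G$, which are disjoint from the edges covered by $\F$. Therefore $\F'$ is a partial $(n,k)$-design on $V$ containing $\F$. A pair of vertices of $V$ is \emph{not} covered by $\F'$ precisely when the corresponding edge of $G$ lies in some copy of $K_k$ from $\mathcal{D}_W := \mathcal{D} \setminus \mathcal{D}_V$, that is, a copy using at least one vertex of $W$.

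To bound the number of such uncovered pairs, note that each $w \in W$ has degree $n+t-1$ in $G \ast K_t$, so $w$ lies in exactly $(n+t-1)/(k-1)$ members of $\mathcal{D}$. Double counting incidences $(w, T)$ with $w \in W \cap V(T)$ yields
\[
|\mathcal{D}_W| \le \sum_{w \in W} \frac{n+t-1}{k-1} = \frac{t(n+t-1)}{k-1}.
\]
Each $T \in \mathcal{D}_W$ contains at most $\binom{k-1}{2}$ edges with both endpoints in $V$ (the maximum being attained when $|V(T) \cap W| = 1$), so the number of uncovered pairs is at most
\[
\binom{k-1}{2} \cdot \frac{t(n+t-1)}{k-1} = \frac{(k-2)\, t\, (n+t-1)}{2}.
\]
Choosing $\eps$ small enough that also $7k^2 \sqrt{\eps} \le 1$ guarantees $n + t - 1 \le 2n$, hence the above is at most $(k-2)\, t\, n \le 7 k^3 \sqrt{r}\, n$, which is comfortably below $21 k^3 \sqrt{|\F|}\, n$.

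There is no real obstacle here beyond verifying the above bookkeeping: the entire statement reduces to Lemma \ref{lem:kkdecomp} plus a single double-counting estimate on how many edges inside $V$ the $K_k$'s of $\mathcal{D}$ that touch $W$ can carry. The only parameter to calibrate is $\eps$, which must be taken at most the $\eps$ from Lemma \ref{lem:kkdecomp} and small enough that $n + t - 1 \le 2n$.
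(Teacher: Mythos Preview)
Your proof is correct and follows essentially the same approach as the paper: apply Lemma~\ref{lem:kkdecomp} to obtain a $K_k$-decomposition of $G\ast K_t$, keep only those $K_k$'s lying entirely inside $V$, and bound the number of uncovered pairs by counting how many $K_k$'s of the decomposition touch $W$. Your bookkeeping is in fact slightly tighter (you use $\binom{k-1}{2}$ rather than $\binom{k}{2}$ for the edges inside $V$ carried by a $K_k$ meeting $W$), yielding $7k^3\sqrt{|\F|}\,n$ instead of the paper's $21k^3\sqrt{|\F|}\,n$.
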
 
\begin{proof}
Each block of $\F$ corresponds to a copy $K_k$ in $K_n$, thus let $G$ be the graph obtained from $K_n$ by deleting these $|\F|$ copies. Then, for some $t \le 7 k^2 \sqrt{|\F|}$, $G \ast K_t$ contains a $K_k$-decomposition. Note that every two $K_k$'s in a $K_k$-decomposition which contain the same vertex $v$ intersect only in $v$. Thus there are exactly $(t + n - 1) / (k-1)$ copies of $K_k$ which contain a vertex $v$, thus at most
$$
    t \cdot \frac{t + n - 1}{k-1} \le \frac{2tn}{k-1} \le 21 k n \sqrt{|\F|}
$$
copies of $K_k$ contain at least one newly added vertex. The first inequality holds for sufficiently small $\eps$. Therefore, deleting all the copies of $K_k$ containing at least one newly added vertex gives a partial $(n,k)$-design over $[n]$ which has at most $21k^3 n \sqrt{|\F|}$ pairs of vertices uncovered.
\end{proof}

We finish this section with a short discussion on possible extensions of Theorems~\ref{thm:steiner} and~\ref{thm:kkdecomp} to hypergraphs. Denote by $K_n^{(3)}$ the complete 3-uniform hypergraph with $n$ vertices. Somewhat surprisingly, in this case is not true that by removing $\eps n^3$ edge-disjoint copies of $K_4^{(3)}$ from $K_n^{(3)}$ we can always decompose the remaining hyperedges into copies of $K_4^{(3)}$, even if we add linearly many full degree vertices. The following construction shows this: Fix two vertices $v,w$ and choose a collection $\F$ of quadratically many hyperedges in $V(K_n^{(3)})\setminus \{v,w\}$ that cover every pair of vertices exactly once. In other words, $\F$ is a STS on $V(K_n^{(3)}) \setminus \{v, w\}$. Remove all $K_4^{(3)}$'s formed by a hyperedge in $\F$ together with the vertex $v$. Note that this removes only $O(n^2)$ hyperedges, whereas $K_n^{(3)}$ has $\Theta(n^3)$ edges. Now consider the hyperedges containing $v,w$. Each hyperedge $\{v,w,x\}$ needs to be covered, but the $\{v,x,y\}$ edge has already been used for all $y$ and so a new vertex $x'$ needs to be added, moreover these new vertices need to be different and so at least $n-2$ new vertices need to be added. 

Note, however, that by a celebrated result of Keevash~\cite{keevash} about existence of designs in hypergraphs with very large degree, there exists an absolute constant $C$ such that for $t = Cn + O(1)$ we have that $\HH \ast K_t$ contains a $K_4^{(3)}$-decomposition for any hypergraph $\HH$ on $n$ vertices. To salvage a deficiency problem for hypergraphs, we believe the following is plausible.

\begin{question}
Is it true that for every $\eps>0$ there exists a $\delta>0$ such that the following holds for large enough $n$: Let $\HH$ be obtained from $K_n^{(3)}$ by deleting some number of edge-disjoint copies of $K_4^{(3)}$ so that every vertex is incident to at most $\delta n^2$ such deleted copies. Then there exists an integer $t\leq \eps n$ so that $\HH\ast K_t$ has a $K_4^{(3)}$-decomposition.
\end{question}


\section{Embedding partial Latin squares}\label{sec:latin}

In this section we prove Theorem \ref{thm:latinort}. As mentioned earlier, Theorem \ref{thm:latinort} can be seen as a multi-partite version of Theorem \ref{thm:kkdecomp}. Let us make this connection explicitly. Suppose we are given $k-2$ pairwise orthogonal partial Latin squares $P_1, \ldots, P_{k-2}$ of order $n$. Let $V_1, \ldots, V_k$ be vertex classes of $K_{n, \ldots, n}$, the complete $k$-partite graph with each part being of size $n$ (we call such a complete $k$-partite graph \emph{$n$-balanced}), and label vertices in each $V_i$ as $\{1, \ldots, n\}$. Throughout this section we are always working with $k$-partite graphs, thus there is no risk of ambiguity by writing $K_{n, \ldots, n}$. Slightly abusing notation, we implicitly differentiate between vertices with the same label coming from different $V_i$'s. We now form a family of edge-disjoint cliques in $K_{n, \ldots, n}$ as follows: for each $i, j \in [n]$ such that $(P_t)_{i,j} \neq \ast$ for at least one $t \in \{1, \ldots, r-2\}$, take a clique with the vertex set
$$
     \{ i \in V_1\} \cup \{j \in V_2\} \cup \{ w \in V_{t+2} \colon 1 \le t \le k-2 \text{ and } (P_{t})_{i,j} = w\}.
$$
Clearly, each such clique is of size at least $3$ and at most $k$. Let us denote all these cliques by $A_1, \ldots, A_m$. It is important to observe that every such clique contains a vertex from $V_1$ and $V_2$. Note that if there exists a $K_k$-decomposition of a complete $k$-partite graph $K_{n', \ldots, n'}$, for some $n' \ge n$, such that each $A_i$ belongs to a clique from this decomposition, then there exist pairwise orthogonal Latin squares $P_1', \ldots, P_{k-2}'$ of order $n'$, with each $P_i'$ being an embedding of $P_i$: simply set $(P'_t)_{i,j}$ to be the label of the vertex in $V_{t+2}$ which belongs to the $K_k$ from this decomposition which contains $i \in V_1$ and $j \in V_2$.

In the case where all $A_i$'s are of size $k$ then what we are asking for is just an embedding of a partial $K_k$-decomposition (that is, its multi-partite version). However, some cliques could be smaller than $k$ and handling this is one of the main differences compared to the proof of Theorem \ref{thm:kkdecomp}. This is done in the following lemma. 

\begin{lemma}\label{lem:extend}
For every $k \ge 3$ there exists $\varepsilon > 0$ such that the following holds. Let $m \le \eps n^2$ and $A_1,A_2,\ldots,A_m$ be edge-disjoint cliques in $K_{n, \ldots, n}$ such that each $A_i$ contains a vertex from $V_1$ and a vertex from $V_2$. Then, for $n' = n +  8 k \sqrt{m}$, there exist a collection of edge-disjoint $K_k$'s $B_1,B_2,\ldots, B_m$ in $K_{n', \ldots, n'}$ such that $A_i\subseteq B_i$ for all $i \in [m]$.
\end{lemma}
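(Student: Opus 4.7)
The plan is to construct each $B_i = A_i \cup \{x_{i,j} : j \in S_i\}$ greedily, where $S_i := \{j \in [k] : A_i \cap V_j = \emptyset\}$ collects the missing parts and each $x_{i,j}$ is chosen from the enlarged part $V_j' := V_j \cup W_j$ of size $n' = n + 8k\sqrt{m}$. Since the original $A_i$'s are pairwise edge-disjoint, edge-disjointness of the $B_i$'s reduces to requiring that the \emph{new} edges of each $B_i$ (those incident to at least one $x_{i,j}$) are pairwise distinct across the $B_i$'s and do not coincide with any edge of another $A_{i'}$. The cliques will be processed one at a time, with a small batch of pathological cliques treated first.

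Set $a_v := |\{i : v \in A_i\}|$. Each $A_i \ni v$ uses a distinct edge from $v$ to $V_1$ (or to $V_2$, if $v \in V_1$), so $a_v \le n$, and double counting gives $\sum_v a_v \le km$, whence $|H| \le k\sqrt{m}$ for the \emph{heavy} set $H := \{v : a_v > \sqrt{m}\}$. In the \emph{Light phase}, for each $A_i$ with $A_i \cap H = \emptyset$, I would greedily pick $x_{i,j} \in W_j$ for $j \in S_i$ in turn, avoiding (a) vertices $x$ such that $vx$ has been used previously, for some $v \in A_i$, and (b) vertices $x$ such that $x x_{i,j'}$ has been used for some earlier chosen $x_{i,j'}$. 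Part (a) blocks at most $\sum_{v \in A_i} a_v \le k\sqrt{m}$ vertices. For (b), always pick each $x_{i,j'}$ of minimum \emph{load} $t_x := |\{i' : x \in B_{i'}\}|$; since $\sum_{x \in W_{j'}} t_x \le m$, the minimum load is at most $m/|W_{j'}| = \sqrt{m}/(8k)$, so (b) contributes at most $\sqrt{m}/8$ forbidden vertices. The total blockage $O(k\sqrt{m})$ is strictly less than $|W_j| = 8k\sqrt{m}$, so every greedy step succeeds.

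In the \emph{Heavy phase}, carried out before the Light phase, I would handle all $A_i$ intersecting $H$. For each $v \in H$ in turn I would simultaneously assign extensions to all currently unprocessed $A_i \ni v$, choosing the $x_{i,j}$'s in $V_j'$ (now drawing also from original vertices of $V_j$ as needed) so that for each fixed $j$ they are pairwise distinct, which is necessary so that the edges $v x_{i,j}$ do not repeat. Since $|V_j'| \ge n \ge a_v$ there is enough room, and a Hall-type matching argument, using $|H| \le k\sqrt{m}$ to control the forbidden vertices coming from edges of other $A_{i'}$'s and from already-processed heavy vertices, produces a valid simultaneous assignment. \textbf{The main obstacle is precisely this Heavy phase}: a single heavy $v$ can lie in as many as $n$ cliques, so the extensions cannot all be drawn from $W_j$ alone, forcing one to use original vertices of $V_j$ and then to guarantee that those extra edges do not collide with edges of other $A_{i'}$'s. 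The key leverage is that $|H|$ is small, so the constraints at each heavy step stay controllable and the Heavy phase consumes only a negligible fraction of each $W_j$, leaving essentially the full budget $8k\sqrt{m}$ available for the subsequent Light phase.
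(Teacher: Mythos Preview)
Your high-level plan (split into a heavy phase and a light phase, greedy extension with a min-load rule) matches the paper's, but the way you partition the cliques and the two vertex pools is different from the paper's, and this difference creates two real gaps.

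\textbf{Gap in the Light phase count.} You run the Heavy phase first and allow it to place extensions in all of $V_j' = V_j \cup W_j$. In particular, a light vertex $v\in V_j$ may be selected as an extension once per heavy batch, i.e.\ up to $|H|\le k\sqrt{m}$ times. Hence after the Heavy phase a light $v$ can sit in as many as $a_v+|H|\le (k+1)\sqrt{m}$ cliques, not $a_v$. Your bound ``Part (a) blocks at most $\sum_{v\in A_i} a_v\le k\sqrt{m}$'' therefore undercounts: the correct bound is $O(k^2\sqrt{m})$, which already exceeds $|W_j|=8k\sqrt{m}$ once $k\ge 7$. The paper avoids this interference by splitting the new vertices into two disjoint pools $T_j'$ and $T_j''$: the first phase draws only from $T_j'$, the second from $V_j\cup T_j''$, so the first phase never raises the load of any vertex that the second phase sees.

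\textbf{Gap in the Heavy phase.} Your split is ``$\ge 1$ heavy vertex'' versus ``$0$ heavy vertices'', so a heavy clique may contain two (or more) heavy vertices, each lying in up to $n$ original cliques. When you extend such an $A_i$ into part $j$, the forbidden set coming from these heavy vertices alone can have size on the order of $n$ from each, and your Hall-type claim is not justified in this regime. The paper sidesteps this by a finer split: ``bad'' cliques are those with \emph{at least two} vertices from the iteratively chosen set $Q$ (of size exactly $\sqrt{m}$), and the crucial observation is that every vertex lies in at most $|Q|=\sqrt{m}$ bad cliques (by edge-disjointness: each bad clique through $v$ uses a distinct $v$--$Q$ edge). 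This makes the bad phase a clean greedy argument in $T_j'$ with blockage at most $(k-1)\sqrt{m}$. The remaining ``good'' cliques have at most one $Q$-vertex, and the $V_1,V_2$-hypothesis bounds that single vertex's load by $n$, which is exactly why the good phase can afford to work in $V_j\cup T_j''$ (of size $\approx n+4k\sqrt{m}$) and maintain the $2\sqrt{m}$-load invariant on all non-$Q$ vertices via min-load selection.

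In short, your two phases are roughly the paper's two phases swapped, but without the disjoint-pool trick and without the ``$\ge 2$'' threshold that makes the counting go through; fixing either phase seems to require reorganising along the paper's lines.
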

\begin{proof}
Let us denote the set of newly added $8k\sqrt{m}$ vertices to each $V_i$ by $T_i$. It will be convenient for the proof to further split each $T_i$ into two sets, say $T_i'$ and $T_i''$, of (nearly) equal size.

Let $Q = \{v_1,v_2,\ldots,v_{\sqrt{m}}\}$ be a set of \emph{bad} vertices, defined iteratively as follows. Having defined $v_1,v_2,\ldots,v_i$, let $v_{i+1} \in V(K_{n, \ldots, n}) \setminus \{v_1, \ldots, v_i\}$ be a vertex incident to the largest number of $A_j$'s that are not incident to any of the vertices $v_1,v_2,\ldots,v_i$. Observe that, by definition, any $v\not\in Q$ is incident to at most $\sqrt{m}$ of $A_j$'s that do not contain a vertex from $Q$.

Next, we say that an $A_i$ is \emph{bad} if it contains at least two vertices from $Q$. Since $A_i$'s are edge-disjoint, this implies that every vertex is incident to at most $|Q| = \sqrt{m}$ bad cliques. By relabelling $A_i$'s, we may assume $A_1,\ldots,A_s$ are bad and $A_{s+1},\ldots,A_m$ are not bad, for some $s$. We will first extend the bad cliques using sets $T_i'$ and then extend the remaining cliques using sets $V_i \cup T_i''$. By using such disjoint sets, we can treat both cases independently.

\noindent
\textbf{Extending bad cliques.} Suppose we have extended $A_1,\ldots,A_{i-1}$ to edge-disjoint copies $B_1, \ldots, B_{i-1}$ of $K_k$ using only vertices from sets $T_i'$. Without loss of generality, we may assume $A_i$ is a clique on $v_1 \in V_1, v_2\in V_2,\ldots, v_z \in V_z$, for some $3 \le z < k$ (if $z = k$ then $A_i$ is already a copy of $K_k$). We iteratively extend $A_i$ to a copy of $K_{z+1}$ using a vertex from $T'_{z+1}$, then to a copy of $K_{z+2}$ using a vertex from $T'_{z+2}$, and so on.

Since every vertex is incident to at most $\sqrt{m}$ bad cliques, at most $\sqrt{m}$ vertices from $T'_{z+1}$ are in the same clique as $v_1$ so far. The same holds for $v_2, v_3, \ldots, v_z$. In particular, all but at most $z \sqrt{m}$ vertices in $T'_{z+1}$ are such that together with $A_i$ they form a copy of $K_{z+1}$ which is edge-disjoint from all previously obtained $K_k$'s. Let $t_{z+1} \in T'_{z+1}$ be an arbitrary such vertex. Note that every clique so far which contains $t_{z+1}$ also contains a vertex from $Q$, thus from edge-disjointness we have that $t_{z+1}$ is contained in at most $|Q| = \sqrt{m}$ cliques. Continuing the process, all but at most $z \sqrt{m} + \sqrt{m}$ vertices in $T'_{z+2}$ do not appear in a same clique with either of $v_1, \ldots, v_z, t_{z+1}$. In general, after extending $A_i$ to a copy $K_{z+j}$, for some $j < k - z$, all but at most $(z+j)\sqrt{m}$ vertices in $T'_{z+j+1}$ are `available'. Therefore, we can repeat this process until $A_i$ is extended to a copy of $K_k$ which is edge-disjoint from all other cliques.



\noindent
\textbf{Extending good cliques.} 
Throughout the process of extending good cliques, we maintain an invariant that every vertex in $V_i \cup T''_i$ which is not bad belongs to at most $2\sqrt{m}$ cliques. Note that at the beginning of the procedure this condition is satisfied: a vertex which is not bad is contained in at most $\sqrt{m}$ cliques that do not contain a vertex from $Q$ (otherwise such a vertex would be bad) and at most $|Q| = \sqrt{m}$ cliques which contain a vertex from $Q$, owing to all the cliques being edge-disjoint. 

Suppose we have extended cliques $A_{s+1}, \ldots, A_{i-1}$ to $B_{s+1}, \ldots, B_{i-1}$ and consider a good clique $A_i$. Without loss of generality, we may assume that it is a clique on $v_1 \in V_1, \ldots, v_z \in V_z$, for some $z < k$, such that $v_1$ is a bad vertex (if it has a bad vertex at all). By the definition of a good clique, neither of $v_2, \ldots, v_z$ can then be bad. As each clique uses a vertex from $V_1$ and $V_2$, $v_1$ is incident to at most $n$ cliques (note that for this it is crucial that every clique contains a vertex from both $V_1$ and $V_2$; otherwise our `without loss of generality' assumption would not be true). Therefore, by the invariant, at most $n + 2 z \sqrt{m}$ vertices in $V_{z+1} \cup T''_{z+1}$ are part of a clique together with one of $v_1, \ldots, v_z$. Therefore there are at least $k \sqrt{m}$ `available' vertices. Let us choose $v_{z+1} \in V_{z+1} \cup T''_{z+1}$ to be one such available vertex which is used the least number of times (that is, it belongs to the smallest number of cliques $B_1, \ldots, B_{i-1}, A_i, A_{i+1}, \ldots, A_m$). This implies that $v_{z+1}$ appears in at most
$$
    \frac{m}{k \sqrt{m}} < \sqrt{m}
$$
cliques. In particular, by extending $A_i$ using $v_{z+1}$ we have that $v_{z+1}$ appears in at most $\sqrt{m} + 1$ cliques, thus the invariant remains satisfied.

Continuing this process, we have that there are at most $n + 2(z+1) \sqrt{m}$ vertices in $V_{z+2} \cup T''_{z+2}$ which appear in a clique with either $v_1, \ldots, v_z, v_{z+1}$. Again, by choosing $v_{z+2} \in V_{z+2} \cup T''_{z+2}$ to be an available vertex which appears in the least number of cliques, we maintain the invariant on the number of cliques which contain any vertex from $V_{z+2} \cup T''_{z+2}$. In general, after extending $A_i$ to a copy of $K_{z+j}$, for some $j$ such that $z + j < k$, we have at least $k \sqrt{m}$ available vertices in $V_{z+j+1} \cup T''_{z+j+1}$, thus we can continue the process until $A_i$ is extended to a copy of $K_k$.
\end{proof}

The following lemma plays the role analogue to the role of Lemma \ref{lem:defHajSze} in the proof of Theorem \ref{thm:kkdecomp}.

\begin{lemma} \label{lemma:defLatinortHSz}
Let $k \ge 3$ be an integer and let $G$ be a $k$-partite graph with vertex classes $V_i = S_i\dot\cup T_i$, for $i= 1, 2, \ldots, k$, where $|T_i| \ge 9 k \sqrt{m}$ and all $V_i$'s are of the same size. Moreover, suppose that every vertex $v \in V_i$ has at least $|V_j| - \sqrt{m}$ neighbors in every $V_j$, for $i \neq j$. Then no matter how we remove at most $r_v$ edges between $v \in S_i$ and each $S_j$, for $i \neq j$, such that $\sum_{v \in S_i} r_v \le m$ for each $i$, the resulting graph contains a $K_k$-factor.
\end{lemma}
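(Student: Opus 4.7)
The proof plan closely mirrors that of Lemma~\ref{lem:defHajSze}, adapted to the $k$-partite setting. First, I would identify \emph{bad} vertices $B_i=\{v\in S_i : r_v\ge 2\sqrt m\}$; the edge budget $\sum_{v\in S_i}r_v\le m$ gives $|B_i|\le\sqrt m/2$, hence $|B|=\sum_i|B_i|\le k\sqrt m/2$. I would then cover the bad vertices one at a time: for $v\in B_i$, greedily pick a previously-unused $w_j\in T_j$ for each $j\ne i$ so that $\{v\}\cup\{w_j\}_{j\ne i}$ spans a $K_k$. Since the deleted edges lie entirely inside $\bigcup_\ell S_\ell$, the degree of $v$ into $T_j$ is still at least $|T_j|-\sqrt m$, and when choosing $w_j$ one must avoid at most $\sqrt m$ non-neighbours of $v$, at most $(k-2)\sqrt m$ non-neighbours of the previously chosen $w_{j'}$, and at most $|B|\le k\sqrt m/2$ previously used $T_j$-vertices: a total of at most $2k\sqrt m$ vertices, well below $|T_j|\ge 9k\sqrt m$.

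Next I would use the bookkeeping observation that each $K_k$ constructed above consumes exactly one vertex from every class $V_j$, so after this step each part has lost precisely $|B|$ vertices and the residual graph $H$ on parts $V_j'$ is balanced with $N:=|V_j'|=|V_j|-|B|\ge 8k\sqrt m$. A routine calculation shows that for any $v\in V_j'$ and $\ell\ne j$, its $V_\ell'$-degree is at least $|V_\ell|-\sqrt m-r_v-|B|\ge N-3\sqrt m$: the original non-neighbour count in $V_\ell$ is at most $\sqrt m$, the edge deletion removes at most $r_v<2\sqrt m$ further neighbours (and none if $v\in T_j$), and the vertex deletion removes at most $|B|$ more. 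Consequently $\delta^*(H)\ge N-3\sqrt m\ge (1-1/k)N$, since $N\ge 8k\sqrt m\ge 3k\sqrt m$.

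Finally, I would invoke a multi-partite version of the Hajnal--Szemer\'edi theorem (Fischer's conjecture, now a theorem) to extract a $K_k$-factor from $H$. Gluing together this factor with the $K_k$'s from the bad-vertex step yields the desired $K_k$-factor of the original graph. The main obstacle is Step~3: confirming that a $K_k$-factor exists in $H$. However, since our minimum degree bound $N-O(\sqrt m)$ is much stronger than the tight threshold $(1-1/k)N$, one could bypass the full multipartite Hajnal--Szemer\'edi result and argue directly, e.g.\ by setting aside a small absorbing reserve inside the $T_j$'s and then greedily peeling off $K_k$'s. The other mild subtlety is the bookkeeping ensuring that the residual graph is balanced, which follows automatically from the fact that every covering $K_k$ meets every part in exactly one vertex.
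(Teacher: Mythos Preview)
Your proposal is correct and follows essentially the same route as the paper: isolate the high-$r_v$ vertices, cover each of them by a $K_k$ whose remaining $k-1$ vertices are chosen greedily from the $T_j$'s, and then apply a multipartite Hajnal--Szemer\'edi theorem to the balanced remainder. The only point to tighten is the last step: the sharp multipartite Hajnal--Szemer\'edi theorem (Keevash--Mycroft, Lo--Markstr\"om) requires the part size to be large in terms of $k$, which is not guaranteed here, so the paper instead invokes Fischer's easier bound $\tfrac{2k-3}{2k-2}N$ (Lemma~\ref{lemma:fischer}), valid for all $N$ --- and your degree estimate $N-3\sqrt m$ with $N\ge 8k\sqrt m$ comfortably clears that threshold as well.
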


As a clarification, note that we allow $r_v$'s to be different for different vertices as long as the sum condition is satisfied. Observe that the bound on $T_i$ cannot be improved: let $|S_i|=\sqrt{m}$  and $|T_i| = (k-1)\sqrt{m} - 1$ for all $i$, and suppose $G$ is the complete $r$-partite graph. Remove all the edges between every two sets $S_i$ and $S_j$. Every $K_k$ in the resulting graph which contains a vertex from, say, $S_i$, must contain a vertex from $T_j$ for every $j \neq i$. In particular, in order to cover all $\bigcup_{i \ge 2} S_i$ with vertex-disjoint copies of $K_k$ we need $|T_1| \ge (k-1) \sqrt{m}$.

In the proof of Lemma \ref{lemma:defLatinortHSz} we use the following multi-partite version of the Hajnal--Szemer\'edi theorem due to Fischer \cite{fischer1999variants}.

\begin{lemma} \label{lemma:fischer}
    Let $G$ be a $k$-partite $n$-balanced with vertex classes $V_1, \ldots, V_k$, such that each vertex $v \in V_i$ has at least $\frac{2k - 3}{2k - 2}n$ neighbors in each $V_j$, for $j \neq i$. Then $G$ contains a $K_k$-factor.
\end{lemma}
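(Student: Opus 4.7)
The plan is to prove this via the absorbing method, which is a standard modern approach to spanning subgraph problems under minimum-degree-type hypotheses. The strategy splits into three parts: (i) construct a small balanced absorbing set $A\subseteq V(G)$ capable of ``swallowing'' any small balanced leftover set into copies of $K_k$; (ii) find a $K_k$-matching in $G\setminus A$ covering all but a small balanced set of vertices; (iii) absorb these leftovers using $A$, completing the $K_k$-factor.

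For part (i), define a balanced $(k-1)k$-vertex set $U$ (with $k-1$ vertices in each $V_i$) to be an \emph{absorber} for a balanced $k$-tuple $T=(v_1,\ldots,v_k)$, with $v_i\in V_i$, disjoint from $U$, if both $G[U]$ and $G[U\cup T]$ admit a $K_k$-factor. The main claim is that every balanced $k$-tuple has $\Omega(n^{(k-1)k})$ absorbers, and this counting is where Fischer's threshold enters: for $v\in V_i$, the set $V_j\setminus N(v)$ has size at most $\tfrac{n}{2k-2}$ for each $j\neq i$, so any $k-1$ already-chosen vertices in distinct classes different from some $V_\ell$ have at least $n-\tfrac{k-1}{2k-2}n=\tfrac{n}{2}$ common neighbors in $V_\ell$. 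Iterating this bound lets one greedily build, for each $j$, a $K_{k-1}$ on $k-1$ vertices (one per class $\neq V_j$) each adjacent to $v_j$; together these $k$ copies of $K_{k-1}$ form the $K_k$-factor of $G[U\cup T]$. One then verifies that an \emph{alternative} partition of the same $k(k-1)$ chosen vertices into $K_k$'s (not using $T$) also exists, again using the same common-neighborhood bounds. Randomly selecting a small family of such absorbers and taking the union gives the absorbing set $A$ with the required property, via a standard concentration argument (see e.g.\ R\"odl–Ruci\'nski–Szemer\'edi style arguments): with positive probability, every balanced small leftover $X\subseteq V\setminus A$ with $|X\cap V_i|\le b$ for each $i$ satisfies that $G[A\cup X]$ has a $K_k$-factor.

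For part (ii), I would apply Szemer\'edi's regularity lemma to $G\setminus A$, whose partite minimum degree is inherited up to $o(1)$ error since $|A|=o(n)$. The reduced graph $R$ is itself a $k$-partite balanced graph with the same Fischer-type degree condition, and within $R$ one extracts a $K_k$-factor either by induction on $n$ or by a direct matching/Hall-type argument on the multipartite structure. Lifting this via the blow-up lemma produces an almost-spanning $K_k$-matching in $G\setminus A$ leaving only $O(\beta n)$ uncovered vertices, balanced across the $V_i$'s. Part (iii) is then immediate by the defining property of $A$. The main obstacle is the absorber count in (i): the threshold $\tfrac{2k-3}{2k-2}$ is essentially tight, since any weaker condition permits extremal configurations where the non-neighborhoods of two vertices in one class together exclude an entire other class, destroying the common-neighborhood estimates that drive the absorber construction. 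Thus the proof must use the degree hypothesis with essentially no slack at precisely this step.
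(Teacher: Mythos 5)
The paper does not actually prove this lemma; it is quoted from Fischer~\cite{fischer1999variants}, and the surrounding text explicitly remarks that, \emph{unlike} the true Hajnal--Szemer\'edi theorem, ``owing to a large minimum partite-degree, Lemma~\ref{lemma:fischer} is very easy to prove using a straightforward matchings-based argument.'' Your proposal goes in the opposite direction: absorbing set, Szemer\'edi regularity, blow-up lemma. This is not only much heavier than necessary, it proves a strictly weaker statement than the paper needs. The regularity/absorbing machinery gives the conclusion only for $n$ sufficiently large compared to $k$. The paper immediately after the lemma says it chose Fischer's (non-optimal) threshold precisely because the corresponding optimal results of Keevash--Mycroft and Lo--Markstr{\"o}m ``require $n$ to be sufficiently large with respect to $k$, whereas in our proof we rely on the fact that Lemma~\ref{lemma:fischer} holds for all $n$.'' Lemma~\ref{lemma:fischer} is applied inside Lemma~\ref{lemma:defLatinortHSz} to auxiliary $k$-partite graphs whose parts have size on the order of $\sqrt{m}$, which is not large relative to $k$; an asymptotic version would be unusable there.

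Your closing claim that the threshold $\tfrac{2k-3}{2k-2}$ is ``essentially tight'' is also wrong. The sharp minimum partite-degree for a $K_k$-factor in a balanced $k$-partite graph is roughly $\bigl(1-\tfrac1k\bigr)n$, strictly below $\tfrac{2k-3}{2k-2}n = \bigl(1-\tfrac1{2(k-1)}\bigr)n$; the paper cites~\cite{keevash2015multipartite,lo2013multipartite} for exactly this improvement. The generous threshold is what makes the elementary argument go through. The intended proof is: take a perfect matching between $V_1$ and $V_2$ (the bipartite minimum degree exceeds $n/2$, so Hall applies); then for $j=3,\ldots,k$, having built a $K_{j-1}$-factor of $G[V_1\cup\cdots\cup V_{j-1}]$, form the auxiliary bipartite graph between these $n$ cliques and $V_j$ in which a clique $C$ is joined to $v\in V_j$ iff $v$ is a common neighbor of $C$. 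Each clique has $j-1$ vertices each missing at most $\tfrac{n}{2k-2}$ vertices of $V_j$, and each $v\in V_j$ likewise fails to extend at most $(j-1)\tfrac{n}{2k-2}$ cliques, so the auxiliary minimum degree is at least $n-(j-1)\tfrac{n}{2k-2}\ge n-(k-1)\tfrac{n}{2k-2}=n/2$; Hall's theorem then yields a perfect matching, extending the factor to a $K_j$-factor. At $j=k$ the bound is exactly $n/2$, which is why the constant $\tfrac{2k-3}{2k-2}$ appears. This argument is short, elementary, and works for all $n$ and $k\ge 2$, which is what the application requires.
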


It is worth noting that, unlike the Hajnal--Szemer\'edi theorem, owing to a large minimum partite-degree, Lemma \ref{lemma:fischer} is very easy to prove using a straightforward matchings-based argument. A better (and optimal) bound on the minimum partite-degree was obtained by Keevash and Mycroft \cite{keevash2015multipartite} and Lo and Markstr{\"o}m \cite{lo2013multipartite}. However, a drawback in their results is that they require $n$ to be sufficiently large with respect to $k$, whereas in our proof we rely on the fact that Lemma \ref{lemma:fischer} holds for all $n$.

\begin{proof}[Proof of Lemma \ref{lemma:defLatinortHSz}]
Let $S:=\bigcup_i S_i$ and $T:=\bigcup_i T_i$, let $E$ be the set of removed edges and $G'$ the resulting graph. Let us call a vertex $v$ \emph{bad} if $r_v > \sqrt{m}$. Note that every $S_i$ contains at most $\sqrt{m}$ bad vertices, and there are at most $k \sqrt{m}$ bad vertices all together. Let us denote the set of these vertices by $Q$ and set $p := |Q|$. We first find a family of vertex-disjoint $K_k$'s covering all the vertices in $Q$.

To this end, consider an arbitrary ordering $b_1, \ldots, b_p$ of the vertices in $Q$. Suppose that for each $j\leq i$, for some $i < p$, we have found a set $B_j \subset T\cap N_{G'}(b_j) = T \cap N_{G}(b_j)$ of $k-1$ vertices such that $G[B_j]\cong K_{k-1}$ and $B_{j_1}\cap B_{j_2}=\emptyset$ for each $1\leq j_1<j_2\leq i$ (recall that every edge with one endpoint in $T$ which is present in $G$ is also present in $G'$).  Set $P_{j}^i:= T_j \setminus\left( \bigcup_{t\leq i} \{b_t\} \cup B_t \right)$ and note that 
$$
    |P_j^i| = |T_j| - i \ge  8k \sqrt{m}.
$$
Since in $G$ every vertex was missing at most $\sqrt{m}$ edges into any other vertex class, we have 
$$
    |N_G(b_{i+1})\cap P_{j}^i|\geq |P_{j}^i|- \sqrt{m}\geq 7 k \sqrt{m}.
$$ 
For every $j$ such that $b_{i+1} \notin V_j$, take an arbitrary subset $U^i_j\subseteq N(b_{i+1}\cap P_{j}^i)$ of size $7k\sqrt{m}$ and let $H_i$ be a subgraph of $G$ induced by $\bigcup U_j^i$, where the union goes over all $j$ such that $b_{i+1} \notin V_j$. Every vertex $v \in U_{j}^i$ has at least
$$
    |U_{j'}^i| - \sqrt{m} \ge (1 - 1/k) |U_{j'}^i|
$$
neighbors in $U_{j'}^i$, thus a simple greedy argument shows that $H_i$ contains a copy of $K_{k-1}$. Let $B_{i+1}$ be the vertex set of such $K_{k-1}$. As $B_{i+1}$ lies in the neighborhood of $b_{i+1}$, the two together form a copy of $K_k$. Continuing this process, we have found a collection of vertex-disjoint $K_k$'s covering every vertex of $Q$.

Consider the graph $H':= G' \setminus \left( \bigcup_{i = 1}^p \{b_i\} \cup B_i\right)$. Then $H'$ is a $k$-partite graph on the vertex set $W_1, \ldots, W_k$ and, as each copy of $K_k$ in $G$ spans across all $k$ vertex classes, we trivially have that all $W_i$'s are of the same size, and $|W_i| \ge |T_i| - p \ge 8 k \sqrt{m}$. Moreover, by the choice of $Q$ and the degree assumptions on $G$, each vertex $v \in W_i$ has at least
$$
    |W_j| - \sqrt{m} - \sqrt{m} \ge (1 - 1/(2k)) |W_j|
$$
neighbors in each $W_j$. Hence, by Theorem \ref{lemma:fischer}, $H'$ has a $K_k$-factor. This $K_k$-factor together with the copies of $K_k$ which cover $Q$ forms a $K_k$-factor of $G'$.
\end{proof}

Following the connection between $K_k$-decompositions and orthogonal Latin squares from the beginning of the section, it is clear that the following lemma implies Theorem~\ref{thm:latinort}.

\begin{lemma} \label{lemma:latinortmain}
For every $k \ge 3$ there exist $\eps > 0$ such that the following holds for every sufficiently large $n$. Given at most $m \le \eps n^2$ edge-disjoint cliques $A_1, \ldots, A_m$ in $K_{n, \ldots, n}$, each of which contains a vertex from the first two vertex classes, there exists a $K_k$-decomposition of $K_{N, \ldots, N}$, where $N = n + 20 k \sqrt{m}$, such that each $A_i$ is contained within a $K_k$ from such a decomposition.
\end{lemma}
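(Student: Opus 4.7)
The plan is to adapt the proof of Lemma~\ref{lem:kkdecomp} to the multipartite setting, with Lemma~\ref{lem:extend} and Lemma~\ref{lemma:defLatinortHSz} playing the roles that the initial extension step and Lemma~\ref{lem:defHajSze} played in the non-partite case.

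First I would apply Lemma~\ref{lem:extend} to replace the partial cliques $A_1,\ldots,A_m$ by edge-disjoint complete copies $B_1,\ldots,B_m$ of $K_k$ in $K_{n',\ldots,n'}$ with $n' = n + 8k\sqrt{m}$. Let $G_0$ be $K_{n',\ldots,n'}$ with the edges of $B_1,\ldots,B_m$ deleted, and let $G$ be obtained from $G_0$ by adjoining to each partition class a new set of $N - n' = 12k\sqrt{m}$ vertices, all of which are adjacent to every vertex in every other class; a small additive slack of $O(k)$ can be absorbed into this count to meet divisibility requirements. It suffices to find a $K_k$-decomposition of $G$, since then together with $B_1,\ldots,B_m$ it decomposes $K_{N,\ldots,N}$ and contains each $A_i$.

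Call a vertex $v$ \emph{bad} if it lies in more than $\sqrt{m}$ of the $B_i$'s. Since each $B_i$ uses exactly one vertex per class, a double count gives at most $\sqrt{m}$ bad vertices in each $V_i$; by adding non-bad vertices to this set, I may assume each class contains exactly the same number $q \le \sqrt{m}$ of bad vertices, and enumerate them as $b_1,\ldots,b_{kq}$. I would then iteratively process the bad vertices: at step $i$, I construct a family $\mathcal{S}_i$ of edge-disjoint $K_k$'s in the current graph, each containing $b_i$, whose union covers every edge at $b_i$ in the current graph. This reduces to finding a $K_{k-1}$-factor in the link of $b_i$, a $(k-1)$-partite graph, which I would obtain via Lemma~\ref{lemma:defLatinortHSz} applied with $k-1$ in place of $k$: the sets $T_j$ are taken to be the new vertices (in each class other than that containing $b_i$) that are still in the link, and $S_j$ consists of the corresponding old non-bad vertices. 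A routine count shows that the number of $S_j$–$S_{j'}$ edges missing from the current graph stays well below the $m$-bound permitted by the lemma throughout, and the $T_j$'s remain of size $\ge 9(k-1)\sqrt{m}$ because each $b_i$ uses each new vertex at most once.

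After processing all $kq$ bad vertices they are isolated in what remains, and I discard them. The resulting graph $H$ is $k$-partite with balanced classes of size $N - q$, and every vertex has at least $(N-q) - O(k\sqrt{m}) \ge (1-\gamma)(N-q)$ neighbors in every other class, since a non-bad vertex loses at most $\sqrt{m}$ edges to each other class from the original $B_i$-removals and at most $O(k\sqrt{m})$ further edges from participation in the cliques of $\mathcal{S}_1,\ldots,\mathcal{S}_{kq}$. A multipartite analogue of Gustavsson's theorem (the edge count between each pair of classes of $H$ is the same, because every $B_i$ and every $K_k$ used in the bad-vertex cover removes exactly one edge across each pair of classes, which together with the choice of the $O(k)$ slack above arranges all divisibility conditions) then yields a $K_k$-decomposition of $H$. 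Concatenating this decomposition with $\bigcup_i \mathcal{S}_i$ and $\{B_1,\ldots,B_m\}$ produces the desired $K_k$-decomposition of $K_{N,\ldots,N}$.

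The main obstacle is the bookkeeping during the iterative bad-vertex phase: one must verify that the hypotheses of Lemma~\ref{lemma:defLatinortHSz} remain satisfied at every step (degrees between old non-bad vertices, sizes of the residual $T_j$'s, and the bound $m$ on removed edges), and that the divisibility conditions required by the multipartite Gustavsson-type theorem hold for $H$; both are controlled by the $20k\sqrt{m}$ slack.
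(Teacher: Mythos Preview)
Your plan is essentially identical to the paper's proof: apply Lemma~\ref{lem:extend}, enlarge each class by new vertices $T_i$, iteratively cover all edges at each bad vertex via a $K_{k-1}$-factor in its link obtained from Lemma~\ref{lemma:defLatinortHSz}, and finish with the multipartite Gustavsson theorem (Theorem~\ref{thm:gust_multip}).

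There is one quantitative wrinkle in your bookkeeping. With your bad-threshold of $\sqrt{m}$ you get up to $k\sqrt{m}$ bad vertices in total, and after processing $i$ of them the degree of every remaining vertex into each other class (in the graph with $E$ temporarily restored) may have dropped by as much as $i$, hence up to $k\sqrt{m}$; but the degree hypothesis of Lemma~\ref{lemma:defLatinortHSz} only allows a shortfall of $\sqrt{m}$, so for $i>\sqrt{m}$ the lemma no longer applies as stated. The paper sidesteps this by declaring $v$ bad when $r_v > k\sqrt{m}$, giving at most $\sqrt{m}/k$ bad vertices per class and $\sqrt{m}$ in total, so the cumulative degree loss stays below $\sqrt{m}$ throughout the iteration. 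With that single change your argument goes through verbatim. Incidentally, no extra $O(k)$ divisibility slack is needed: Theorem~\ref{thm:gust_multip} only requires balanced classes and that each vertex have the same degree into every other class, both of which are automatic here because every removed edge-set is a transversal $K_k$.
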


In the proof of Lemma \ref{lemma:latinortmain}, instead of Gustavsson's theorem (Theorem \ref{thm:gust}), we use the following multi-partite version.

\begin{theorem}[Corollary of \cite{barber}] \label{thm:gust_multip}
For every integer $k \ge 3$ there exists $\gamma > 0$ such that the following holds for every sufficiently large $n$. If $G$ is a $k$-partite $n$-balanced graph with vertex classes $V_1, \ldots, V_k$ such that every vertex $v \in V_i$ has $d_v \ge (1 - \gamma)n$ neighbors in each $V_j$, for $i \neq j$, then $G$ has a $K_r$-decomposition.
\end{theorem}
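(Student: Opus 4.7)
The plan is to run the same three-phase argument used for Lemma \ref{lem:kkdecomp}, but in the $k$-partite setting. First extend the given cliques $A_1,\dots,A_m$ (which may have size strictly less than $k$) to honest copies of $K_k$ using Lemma \ref{lem:extend}, then use a fresh buffer of added vertices to absorb the edges incident to a small set of \emph{bad} vertices via Lemma \ref{lemma:defLatinortHSz}, and finally apply the multipartite Gustavsson-type result (Theorem \ref{thm:gust_multip}) to decompose what remains. Concretely, Lemma \ref{lem:extend} gives edge-disjoint copies $B_1,\dots,B_m$ of $K_k$ in $K_{n_1,\dots,n_1}$ with $n_1=n+8k\sqrt{m}$, each containing the corresponding $A_i$; then add a further $12k\sqrt{m}$ new vertices $W_i$ to each class $V_i$, producing the ambient $K_{N,\dots,N}$ with $N=n+20k\sqrt{m}$, with the $B_i$'s still edge-disjoint.

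Call a vertex \emph{bad} if it lies in more than $\sqrt{m}$ of the $B_i$'s; a double-counting argument gives at most $\sqrt{m}$ bad vertices per class, and I pad each class's bad set $Q_i$ to size exactly $\sqrt{m}$ by adding arbitrary non-bad vertices, so that after Phase~2 the leftover is balanced. Process the vertices of $Q=\bigcup_i Q_i$ in an arbitrary order; when handling $b\in Q_i$, the still-unused edges incident to $b$ form a $(k-1)$-partite graph on $N(b)$, and this neighborhood is \emph{partite-balanced} because every $K_k$ containing $b$ (from a $B_j$ or a previous iteration) contributes exactly one edge from $b$ to each other class. Apply Lemma \ref{lemma:defLatinortHSz} (with $k-1$ in place of $k$, and with $S_j=V_j\cap N(b)\setminus W_j$, $T_j=W_j\cap N(b)$) to obtain a $K_{k-1}$-factor of $N(b)$, and glue $b$ onto each $K_{k-1}$ to produce edge-disjoint $K_k$'s containing $b$ whose union covers every unused edge at $b$. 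Afterwards $b$ is isolated in the remaining graph. Finally, delete all of $Q$ and apply Theorem \ref{thm:gust_multip} to the leftover $G^\ast$, which is $k$-partite and $(N-\sqrt{m})$-balanced, with each partite-degree dropped by at most $O(k^2\sqrt{m})\le\gamma(N-\sqrt{m})$ for $\eps$ small enough. The resulting $K_k$-decomposition of $G^\ast$, together with the $B_i$'s and the $K_k$'s built in Phase~2, gives the desired $K_k$-decomposition of $K_{N,\dots,N}$ extending every $A_i$.

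The main obstacle is the bookkeeping inside the bad-vertex loop. At each iteration one must verify the hypotheses of Lemma \ref{lemma:defLatinortHSz}: that $|T_j|$ is still at least $9(k-1)\sqrt{m^\ast}$ for a small multiple $m^\ast$ of $m$, and that every vertex of $N(b)$ has all but at most $\sqrt{m^\ast}$ neighbors in every other part of the neighborhood. Both follow with care: a fresh vertex $v\in W_j$ is consumed by at most one $K_k$ per previously processed bad vertex (because each bad-vertex factor is vertex-disjoint), so $v$ loses at most $|Q|=k\sqrt{m}$ neighbors to each other class over the whole process; and the $B_i$-edges lie entirely between $S$-vertices, which lets them play the role of the per-vertex removal budget ($\sum_v r_v=O(m)$) in Lemma \ref{lemma:defLatinortHSz}. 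Once these two invariants are established along the iteration, the rest of the proof is a faithful translation of the proof of Lemma \ref{lem:kkdecomp} to the $k$-partite regime, with Lemma \ref{lem:defHajSze} and Theorem \ref{thm:gust} replaced by their multipartite analogues.
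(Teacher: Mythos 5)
What you have written is not a proof of Theorem \ref{thm:gust_multip}: it is a sketch of the proof of Lemma \ref{lemma:latinortmain}, the result that \emph{uses} Theorem \ref{thm:gust_multip}. In your final phase you ``apply Theorem \ref{thm:gust_multip} to the leftover $G^\ast$,'' that is, you invoke the very statement you are asked to prove, so the argument is circular. The two statements also have different shapes. Theorem \ref{thm:gust_multip} is a pure decomposition theorem about a single dense $k$-partite $n$-balanced graph with large minimum partite degree (and the implicit divisibility condition that each vertex has the same number of neighbours in every other class); there is no prescribed family of cliques in its hypothesis. Your Phases~1 and~2 --- extending $A_1,\dots,A_m$ via Lemma \ref{lem:extend} and clearing bad vertices via Lemma \ref{lemma:defLatinortHSz} --- address the embedding task of Lemma \ref{lemma:latinortmain} and have no counterpart in Theorem \ref{thm:gust_multip}, which contains no $A_i$'s at all.

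Theorem \ref{thm:gust_multip} is not proved in this paper; it is quoted as a streamlined corollary of the main result of \cite{barber}, which establishes $K_k$-decompositions of dense multipartite graphs by fractional-decomposition and iterative-absorption machinery. It also cannot be re-derived by mimicking the proof of Lemma \ref{lem:kkdecomp}, because that proof in turn rests on the non-partite Gustavsson theorem (Theorem \ref{thm:gust}) as its final black-box step --- and the multipartite analogue of that step is exactly Theorem \ref{thm:gust_multip}. The right move is to chase the reference: state the precise theorem of \cite{barber}, check that a $k$-partite $n$-balanced graph in which every vertex $v$ has $d_v\ge(1-\gamma)n$ neighbours in each other class, with $d_v$ independent of the class (the divisibility assumption made explicit in the remark after the theorem), satisfies its hypotheses, and conclude.
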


For clarity, note that a vertex $v$ has the same number of neighbors, $d_v$, in each $V_i$ (other than its own vertex class) and that thus number might be different for different vertices. The main result in \cite{barber} is more general and here we have stated a streamlined version which suffices for our application.

\begin{proof}[Proof of Lemma \ref{lemma:latinortmain}]
First, by  Lemma~\ref{lem:extend} we have that there exists a family of $m$ edge-disjoint $K_k$'s $B_1, \ldots, B_m$ in $K_{n', \ldots, n'}$, where $n' = n + 9 k \sqrt{m}$, such that each $A_i$ is contained within $B_i$. Let us denote the vertex classes of $K_{n', \ldots, n'}$ by $V_1, \ldots, V_k$, and let $E$ denote the edge set of all $B_i$'s. Note that $E$ contains exactly $m$ edges between every two sets $V_i$ and $V_j$, and moreover the number of edges from $E$ between $v \in V_i$ and $V_j$ is the same as between $v$ and $V_{j'}$, for $i \neq j, j'$ and any vertex $v \in V_i$. Let us denote this number by $r_v$ (note that different vertices might have different values of $r_v$). This will allow us to eventually apply Lemma \ref{lemma:defLatinortHSz}.

We show the lemma for $N = n' + 11 k \sqrt{m}$. We can think of $K_{N, \ldots, N}$ as taking $K_{n', \ldots, n'}$ and adding $11 k \sqrt{m}$ new vertices, denoted by $T_i$, to each $V_i$ and connecting them completely to all other vertex classes (including all other $T_j$'s). Let $G$ be a graph obtained from $K_{N, \ldots, N}$ by removing edges in $E$. Note that it suffices to show that $G$ has a $K_k$-decomposition. 

We say that a vertex $v$ in $G$ is \emph{bad} if $r_v > k \sqrt{m}$. There are at most $\sqrt{m} / k$ bad vertices in each $V_i$, thus by arbitrarily nominating some additional vertices to be bad, we can assume that each $V_i$ contains exactly $\sqrt{m} / k$ bad vertices. This also gives $\sqrt{m}$ bad vertices overall. We iteratively build a collection of edge-disjoint copies of $K_k$ in $G$ that cover every edge incident to at least one  bad vertex. Let $b_1, \ldots, b_{\sqrt{m}}$ be an arbitrary ordering of the bad vertices and suppose we have defined collections $\mathcal{S}_1,\mathcal{S}_2,\ldots,\mathcal{S}_i$ of distinct copies of $K_r$ in $G'$, for some $i < \sqrt{m}$, such that the following holds:
\begin{enumerate}
    \item\label{item:disj2} every distinct $F_1 \in \mathcal{S}_{j_1}$ and $F_2 \in \mathcal{S}_{j_2}$, $1 \le j_1, j_2 \le i$, are  edge-disjoint, 
    \item\label{item:isol2} for every $1\leq j\leq i$,  every edge in $G'$ incident to $b_j$ belongs to some copy of $K_k$ in $\mathcal{S}_j$ and every copy of $K_k$ in $\mathcal{S}_j$ contains $b_j$.
\end{enumerate}
Set $G_i$ to be the graph obtained from $G$ by deleting all edges in the $K_k$'s from $\bigcup_{j\leq i}\mathcal{S}_j$. By~(\ref{item:isol2}) we have that the vertices $b_1,b_2,\ldots,b_i$ are isolated in $G_i$. By~(\ref{item:disj2}) we have that for every $1\leq j \leq i$ every vertex $v\neq b_j$ occurs in at most one of the $K_k$'s in $\mathcal{S}_j$ (as every such $K_k$ also contains $b_j$). Hence by forming $G_i$ from $G$, the number of neighbors of every vertex $v \in (V_i \cup T_i) \setminus \{b_1,b_2,\ldots,b_i\}$ in each $V_j \cup T_j$, $j \neq i$, reduces by at most $i < \sqrt{m}$. As in $G$ every vertex $v \in V_j \cup T_j$ was connected to every vertex in $T:=\cup_{j' \neq j}  T_{j'}$, letting $N_i(v)$ denote the neighborhood of a vertex $v$ in $G_i$, we have
\begin{equation} \label{eq:Tji}
    |N_i(b_{i+1}) \cap T_j|\geq 11 k \sqrt{m} - \sqrt{m}
\end{equation}
for every $j \neq z$, where $z \in [k]$ is such that $b_{i+1} \in V_{z}$. Next, for each $j \neq z$ set $T_j^i = N_i(b_{i+1})\cap T_j$ and $S_j^i:=N_i(b_{i+1}) \cap V_j$. As all the missing edges in $G_i$ come from edge-disjoint copies of $K_k$, we have that all the sets $S_j^i$ are of the same size, as well as all of $T_j^i$. Let $H_i$ be a $(k-1)$-partite subgraph of $G_i$ induced by $\bigcup_{j \neq z} T_j^i \cup S_j^i$, with edges from $E$ temporarily added back in. Therefore, every vertex $v \in S_j^i \cup T_j^i$ has at least
$$
    |S_{j'}^i \cup T_{j'}^i|- \sqrt{m}
$$
neighbors into every $S_{j'}^i \cup T_{j'}^i$, for $j \neq j'$ and $j, j' \neq z$. By the discussion on the properties of $E$ from the beginning of this proof, we may remove edges from $E$ again and apply Lemma~\ref{lemma:defLatinortHSz} with $k$ playing the role of $k-1$ to find a collection $\mathcal{F}$ of vertex-disjoint $K_{k-1}$'s covering every vertex of $H_i$. Note that this is indeed possible as every vertex $v \in S_j^i$ is incident to at most $r_v$ edges from $E$ with the other endpoint in $S_{j'}^i$ and $\sum_{v \in S_j^i} r_v \le \sum_{v \in V_j} r_v \le m$. By adding $b_{i+1}$ to every copy of $K_{k-1}$ in $\mathcal{F}$, we obtain a desired collection $\mathcal{S}_{i+1}$ of  edge-disjoint $K_k$'s. This takes care of bad vertices.

Let $G'$ be the graph obtained from $G$ by deleting all edges in the $K_k$'s from $\bigcup_{j\leq p}\mathcal{S}_j$, and moreover delete the isolated vertices $b_1,b_2,\ldots,b_{\sqrt{m}}$ from $G'$. As before, by forming $G'$ from $G$, the degree of every vertex $v \in (V_i \cup T_i) \setminus \{b_1,b_2,\ldots,b_{\sqrt{m}}\} =: W_i$ into $W_j$, for $j \neq i$, reduced by at most $\sqrt{m}$. Hence, as a vertex $v \in W_i$ is not bad, it has at least
$$
    |W_j| - \sqrt{m} - r_v \ge |W_j| - 2k \sqrt{m}
$$
neighbors in every $W_j$, for $j \neq i$. Moreover, as every $V_j$ contains the same number of bad vertices (which is $\sqrt{m}/k$), we have that all $W_j$'s are also of the same size, namely $|W_j| \ge n' + 10 k \sqrt{m} > n$. Note that all the missing edges in $G'$ correspond to edge-disjoint $K_k$'s, thus $G'$ satisfies the conditions of Theorem \ref{thm:gust_multip}. Since for $\eps > 0$ sufficiently small compared to $k$ we have 
$$
    2k\sqrt{m} \le \gamma n \le \gamma |W_j|,
$$
Theorem \ref{thm:gust_multip} implies that $G'$ contains a $K_k$-decomposition. This decomposition together with the $K_k$'s in $\mathcal{S}_1,\ldots \mathcal{S}_{\sqrt{m}}$, forms a $K_k$-decomposition of $G$.
\end{proof}

We now show that Lemma \ref{lemma:latinortmain} is optimal up to a multiplicative constant factor for $m \ge 2n$. Consider a complete $k$-partite $n$-balanced graph $K_{n, \ldots, n}$ with vertex classes $V_1, \ldots, V_k$. From each $V_i$ pick an arbitrary subset $X_i \subseteq V_i$ of size $\sqrt{m/2}$, and choose a vertex $v_1 \in V_1 \setminus X_1$. We now form a family of at most $m$ cliques as follows. First, take a $K_k$-decomposition of the subgraph induced by $X_1, \ldots, X_k$ (giving altogether $m/2$ cliques). Second, take a $K_{k-1}$-factor of the $(k-1)$-partite subgraph induced by $V_2 \setminus X_2, \ldots, V_k \setminus X_k$, and to each copy of $K_{k-1}$ in this factor append $v_1$ (giving additional $n - |X_i| \le m/2$ cliques). Suppose that $K_{n', \ldots, n'}$, for some $n' \ge n$, contains a $K_k$-decomposition which contains all of these cliques. Observe that a copy of $K_k$ in this decomposition which contains $v_1$ and any $v_2 \in X_2$, requires a new vertex in each of the $k-2$ other vertex classes. In particular, it requires a new vertex in the $k$-th class, and all these new vertices have to be distinct. The same holds for a copy of $K_k$ which contains $v_1$ and $v_3 \in X_3$, and so on. As each new vertex has to be different, this requires at least $(k-2)|X_i| = (k-2)\sqrt{m/2}$ new vertices in the $k$-th color class. Finally, in order for a complete $k$-partite graph to have a $K_k$-decomposition, all vertex classes have to be of the same size.

Similarly as it was the case in Section \ref{sec:designs}, it is likely that if we only have $m < cn$ cliques, for some small constant $c > 0$, then $n' = n$ suffices.



\section{Hamilton cycles}\label{sec:hami}

To prove Theorem \ref{thm:hami} we will need the following well known result of Chv\'atal~\cite{chvatal} which gives a sufficient condition for a graph to contain a Hamilton cycle.
\begin{theorem}\label{thm:chvatal}
Let $G$ be a graph with vertex degrees $d_1 \leq d_2 \leq \ldots \leq d_n$, where
$n \geq 3$. If $d_i > i$ or $d_{n-i} \geq n - i$ for each $1 \le i < n/2$, then $G$ is Hamiltonian.
\end{theorem}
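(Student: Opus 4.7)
The plan is to argue by contradiction using the classical Ore-style rotation argument, combined with an extremal choice of a non-adjacent pair. Suppose $G$ satisfies the hypothesis but is not Hamiltonian. Since adding an edge can only increase degrees, the condition ``$d_i>i$ or $d_{n-i}\ge n-i$'' is preserved under adding edges, so I may assume $G$ is edge-maximal non-Hamiltonian: adding any missing edge creates a Hamilton cycle. I then pick non-adjacent vertices $u,v$ maximizing $d(u)+d(v)$ and set $h:=\min(d(u),d(v))$; relabel so that $h=d(u)$.

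Because $G+uv$ is Hamiltonian, $G$ contains a Hamilton path $u=v_1,v_2,\dots,v_n=v$. The standard rotation observation is that if some index $i$ satisfies $uv_{i+1}\in E(G)$ and $vv_i\in E(G)$, then $v_1v_{i+1}v_{i+2}\cdots v_nv_iv_{i-1}\cdots v_2v_1$ is a Hamilton cycle in $G$, a contradiction. So the two sets $\{i:uv_{i+1}\in E\}$ and $\{i:vv_i\in E\}$ are disjoint subsets of $\{1,\dots,n-1\}$, which gives $d(u)+d(v)\le n-1$, and in particular $h<n/2$. The maximality of $d(u)+d(v)$ among non-adjacent pairs also promotes this to: every non-adjacent pair $x,y$ in $G$ satisfies $d(x)+d(y)\le n-1$.

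The final step is to contradict the Chv\'atal hypothesis at the index $i=h$, which is legal since $h<n/2$. The $n-1-d(v)\ge h$ vertices in $V\setminus\{v\}$ non-adjacent to $v$ each have degree at most $h$ by the maximality of $d(u)+d(v)$ (one of them is $u$ itself), hence $d_h\le h$; so the hypothesis must yield $d_{n-h}\ge n-h$, i.e.\ the set $W:=\{w\in V:d(w)\ge n-h\}$ has $|W|\ge h+1$. I will derive a contradiction by showing $W\subseteq N(u)$, for then $|W|\le d(u)=h$. First, $u\notin W$ since $d(u)=h<n-h$. Second, for any $w\in W\setminus(N(u)\cup\{u\})$ the pair $u,w$ would be non-adjacent, so $d(u)+d(w)\le n-1$, forcing $d(w)\le n-1-h<n-h$, contradicting $w\in W$.

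The only slightly delicate moving part is the rotation step producing $d(u)+d(v)\le n-1$ from the Hamilton $u$--$v$ path; once that is in hand, the rest is bookkeeping on the ordered degree sequence, exploiting that extremality of $d(u)+d(v)$ gives us both the upper bound $d(w)\le h$ for every non-neighbour $w$ of $v$ and the upper bound $d(w)\le n-1-h$ for every non-neighbour $w$ of $u$. I do not anticipate any genuine obstacle, and the whole argument should fit in under a page.
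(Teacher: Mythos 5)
Your proof is correct, and it is essentially Chv\'atal's original argument. Note that the paper does not prove this theorem at all --- it is cited directly from Chv\'atal's 1972 paper \cite{chvatal} and used as a black box in the proof of Theorem~\ref{thm:hami} --- so there is no in-paper proof to compare against. Your write-up faithfully reproduces the standard proof: pass to an edge-maximal non-Hamiltonian supergraph (legitimate since the degree condition is monotone under edge addition); take a non-adjacent pair $u,v$ maximizing $d(u)+d(v)$ with $h=d(u)\le d(v)$; use the $u$--$v$ Hamilton path and the rotation/disjointness trick to get $d(u)+d(v)\le n-1$, hence $h<n/2$; then observe that the $n-1-d(v)\ge h$ non-neighbors of $v$ all have degree at most $h$ by maximality, forcing $d_h\le h$, so the hypothesis gives $d_{n-h}\ge n-h$; and finally the $h+1$ vertices of degree at least $n-h$ must all lie in $N(u)$, contradicting $|N(u)|=h$. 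All the index bookkeeping checks out, including the disjointness of the two index sets in $\{1,\dots,n-1\}$ in the rotation step.
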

\begin{proof}[Proof of Theorem~\ref{thm:hami}] Let $G$ be an $n$-vertex graph and $t$ an integer so that $G\ast K_t$ does not contain a Hamilton cycle. Let $m(G)$ denotes the number of \emph{missing} edges in $G$, that is $m(G) = \binom{n}{2} - e(G)$. To prove the first part of the theorem, it suffices to show the following bounds on $m(G)$:
\begin{itemize}
    \item If $n + t$ is even then
    \begin{equation} \label{eq:mG1}
        m(G) \ge \begin{cases}
            t(n-1) - \binom{t}{2} & \text{ if } t \le (n+4)/5 \\
            \binom{\frac{n+t+2}{2}}{2} - 1 &\text{ if } t \ge (n+4)/5.
        \end{cases}
    \end{equation}
    
    \item If $n + t$ is odd then
    \begin{equation} \label{eq:mG2}
        m(G) \ge \begin{cases}
            t(n-1) - \binom{t}{2} & \text{ if } t \le (n+1)/5 \\
            \binom{\frac{n+t+1}{2}}{2} &\text{ if } t \ge (n+1)/5.
        \end{cases}
    \end{equation}\textbf{}
\end{itemize}

Let $G' = G \ast K_t$ and label the vertices of $G'$ as $v_1, \ldots, v_{n + t}$ in a non-decreasing order with respect to the degree. Since $G'$ is not Hamiltonian, it does not satisfy the conditions of Theorem~\ref{thm:chvatal}. Note that the minimum degree of $G'$ is at least $t$, thus there exists some $t \le i< (n+t)/2$  such that $d_i \leq i$ and $d_{n+t-i}< n+t-i$. Denote $S:=\{v_1,v_2,\ldots,v_i\}$. From $d_1 \le d_2 \le \ldots \le d_i \leq i$ we deduce that the number of edges missing from $G'$ is at least 
$$
    m(G') \ge \sum_{j = 1}^i (n + t - 1 - d_j) - \binom{i}{2} \ge i(n+t-1-i) - \binom{i}{2} =: f(i).
$$
It is important to notice that $m(G') = f(i)$ iff all the missing edges are incident to $S$ and $S$ is an independent set. Moreover, we have $m(G') = m(G)$. For brevity, let us set $u = (n+t)/2 - 1$ if $n + t$ is even, and $u = (n+t-1)/2$ otherwise. As $f(\cdot)$ is a downward facing parabola and $t \le i \le u$, we have 
\begin{equation} \label{eq:boundmG}
    m(G) \ge \min\{ f(t), f(u) \}.
\end{equation}
Again, if $i \not \in \{t, u\}$ then we have a strict inequality. It is now a matter of straightforward calculation to show that the bound from \eqref{eq:boundmG} gives precisely \eqref{eq:mG1} and \eqref{eq:mG2}. In particular, if $n + t$ is even then $f(t) \le f(u)$ iff $t \le (n+4)/5$, and if $n + t$ is odd then $f(t) \le f(u)$ iff $t \le (n+1)/t$.

We now derive extremal constructions, that is we describe how $G$ has to look in order to achieve equality in \eqref{eq:boundmG}. First, recall the properties obtained so far: every missing edge in $G$ is incident to $S$ and $S$ is an independent set; $i \in \{t, u\}$. Let us first consider the case $f(t) \le f(u)$ and $i = t$. Note that this corresponds to $t \le (n+4)/5$ if $n+t$ is even, and $t \le (n+1)/5$ if $n+t$ is odd. In either case we have 
$$
    m(G) = f(t) = t(n - 1) - \binom{t}{2},
$$
which implies that all the edges incident to $S$ are missing. Therefore $G$ is isomorphic to a graph which contains exactly $t$ isolated vertices.

The other case is a bit more involved and we have to consider the cases when $n + t$ is even and odd separately. Let us first assume $n + t$ is even. Then
$$
    m(G) = f(u) = f((n+t)/2 - 1) = \binom{\frac{n+t+2}{2}}{2} - 1 = \binom{u+2}{2} - 1 = \binom{u}{2} + 2u.
$$
As we know that every edge within $S$ is missing, from the previous bound we conclude that there have to be exactly $2u$ edges missing between $S$ and $V(G) \setminus S$. We aim to show that these $2u$ edges have to be incident to exactly two vertices in $V(G) \setminus S$, from which we conclude that $G$ is isomorphic to the graph obtained from $K_n$ by choosing a set $S$ of $u+2$ vertices and removing all but one edge within $S$. To conclude that there are exactly $2$ vertices incident to all of these $2u$ missing edges we employ, for the first time, the second part of Chv\'atal's theorem: $d_{n + t - u} < n + t - u$. Recall that $u = (n+t)/2 - 1$, thus
$$
    d_{u+1} \le d_{u+2} \le (n+t)/2.
$$
Therefore both $v_{u+1}$ and $v_{u+2}$ are missing at least $n + t - 1 - (n+t)/2 = u$ edges each, and all these edges have to be also incident to $S$. Therefore we obtain the remaining $2u$ missing edges. Note that all vertices in $V(G) \setminus (S \cup \{v_{u+1}, v_{u+2}\})$ have full degree.

Let us finally consider the case $i = u$ and $n + t$ is odd. We now have $u = (n+t-1)/2$ and 
$$
    m(G) = f(u) = \binom{\frac{n + t + 1}{2}}{2} = \binom{u}{2} + u.
$$
Similarly to the previous case, we aim to show that all of the $u$ missing edges with one  endpoint in $V(G) \setminus S$ have to be incident to a single vertex. From $d_{n + t - u} \le n + t - u - 1$ and $n + t - u = u + 1$ we conclude that $v_{u+1}$ is missing at least $u$ edges. Moreover, all these missing edges have another endpoint in $S$. As every other vertex has full degree, we conclude that $G$ is isomorphic to a graph obtained from $K_n$ by choosing a set $S$ of $u+1$ vertices and removing all edges within $S$.

\end{proof}

\section{Triangle factors}\label{sec:corrhaj}

The main idea of the proof of Theorem~\ref{thm:corrhajdef} is to first deal with vertices with small degree and then show that the remaining graph satisfies the minimum degree condition of the Corr\'adi-Hajnal theorem. The details are somewhat tricky and require verification of several technical inequalities.

\begin{proof}[Proof of Theorem \ref{thm:corrhajdef}]
Before we start with the proof let us fix the following constants: $\alpha =0.002$, $\beta  = 0.011$ and $\gamma =0.05$. These values are chosen so that all following inequalities work. Moreover, it will be convenient to set 
$$ 
    \ell = 0.1 n \qquad \text{ and } \qquad h = 0.9n.
$$

Suppose that $n$ is sufficiently large and $t \le n / 1000$ is odd. Let $k =  \lceil (t+1) / 2 \rceil$ and let $G$ be an $n$-vertex graph which is missing at most
\begin{equation} \label{eq:missing}
    d := \begin{cases}
        \binom{k}{2} + k(n - k) - 1, &\text{if } t \text{ is odd}\\
        \binom{k}{2} + k(n-k-1) - 1, &\text{if } t \text{ is even}
    \end{cases}
\end{equation}
edges. Note that in both cases we have
$$
    d < 0.0005n^2.
$$
We show that then $G' = G \ast K_t$ contains a $K_3$-factor. For the rest of the proof let $W$ denote the set of vertices corresponding to $K_t$ in $G'$. We advise the reader that we will often switch between $G$ and $G'$.

Our strategy consists of three steps. In the first step we find a collection of vertex-disjoint triangles which cover every vertex from a set $L \subseteq V(G)$ consisting of vertices of \emph{low} degree, that is
$$
    L = \{v \in V(G) \colon \deg_G(v) < \ell \}.
$$
Let $U_1 \subseteq V(G')$ denote the set of all vertices in $G'$ which are not covered by these triangles. Then in the second step we find a collection of vertex-disjoint triangles in $G'[U_1]$ which cover all the vertices $I \subseteq V(G) \cap U_1$ of \emph{intermediate} degree, 
$$
    I = \{v \in V(G) \cap U_1 \colon \ell \le \deg_G(v) < h \}.
$$
Note that we are indeed looking at the degree of a vertex $v$ in the whole original graph and not just in $G[U_1]$. Again, let $U_2 \subseteq U_1$ denote the set of vertices which are not covered by this and the previous collection of triangles. Finally, we show that $G'[U_2]$ has minimum degree at least $2|U_2|/3$ which, by the Corr\'adi--Hajnal theorem~\cite{corradi} (stated in Section \ref{sec:trianfac}), implies that $G'[U_2]$ contains a triangle-factor. Overall, we obtain a $K_3$-factor of the whole graph $G'$.

Let us show why the first step is indeed possible. As every vertex in $L$ is missing at least $n - 1 - \ell$ edges, we must have
$$
    \frac{|L| (n - 1 - \ell)}{2} \le d\leq 0.0005n^2.
$$
This implies $|L| \le \alpha n$. Let $M_1 \subseteq L$ be a largest subset such that the induced graph $G[M_1]$ contains a perfect matching. Next, consider a largest matching in the bipartite graph induced by $L \setminus M_1$ and $V(G) \setminus L$ and let $M_2 \subseteq L \setminus M_1$ be the set of vertices contained in this matching. Finally, let $M_3 = L \setminus (M_1 \cup M_2)$ be the remaining vertices in $L$. For brevity, denote the size of $M_1, M_2$ and $M_3$ by $m_1, m_2$ and $m_3$, respectively. Note that if $m_1/2 + m_2 + 2m_3 \le t$ then there exists a collection of vertex-disjoint triangles in $G'$ which cover every vertex in $L$: For every vertex in $M_3$ we choose two vertices in $W$; for every vertex in $M_2$ it suffices to choose one additional vertex in $W$ (the third vertex in a triangle comes from the matching which saturates $M_2$); for every edge in a perfect matching from $G[M_1]$ we choose one additional vertex in $W$. 

We now show that
\begin{equation} \label{eq:m1m2m3}
    m_1/2 + m_2 + 2m_3 \le t
\end{equation}
holds. We do this by estimating the number of missing edges in terms of $m_1, m_2$ and $m_3$ and comparing it to \eqref{eq:missing}. First, observe that $L \setminus M_1$ is an independent set as otherwise we get a contradiction with the maximality of $M_1$. This gives us a simple bound of at least $\binom{m_2 + m_3}{2}$ missing edges in $G[L]$. Next we estimate the number of missing edges between $L$ and $V(G) \setminus L$. Based on the definition of $L$, we have that every vertex in $M_1$ is missing at least $n - |L| - \ell$ edges with the other endpoint being in $V(G) \setminus L$. As the size of the largest matching in the bipartite graph $B \subseteq G$ induced by $L \setminus M_1$ and $V(G) \setminus L$ is of size $m_2$, by K\H{o}nig's theorem we have that the size of a smallest vertex cover in $B$ is also of size $m_2$. Recall that the vertex cover is a set of vertices which touches every edge of the graph. Therefore, there is a set of $m_2$ vertices which touch every edge in $B$. As every vertex from $L \setminus M_1$ has degree less than $\ell$ and every vertex in $V(G) \setminus L$ has degree at most $|L \setminus M_1| \le \alpha n < \ell$ (in $B$), we conclude that $e(B) \le \ell m_2$. Therefore there are at least
$$
    (m_2 + m_3) (n - |L|) - \ell m_2
$$
edges missing between $M_2 \cup M_3$ and $V(G) \setminus L$. All together, we obtain that $G$ is missing at least
\begin{equation} \label{eq:missingL}
    \binom{m_2 + m_3}{2} + m_1(n - |L| - \ell) + (m_2 + m_3)(n - |L|) - \ell m_2
\end{equation}
edges. At this point it is convenient to parametrize the previous quantity in terms of $|L| = s$, $m_1$ and $m_2$:
$$
    f(m_1, m_2, s) = \binom{s - m_1}{2} + s(n - s) - \ell (m_1 + m_2).
$$
It is straightforward to see that $f(m_1, m_2, s)$ equals the quantity in \eqref{eq:missingL}. Note that $f$ is decreasing in $m_2$. Assume now, towards a contradiction, that $m_1/2 + m_2 + 2m_3 > t$. Then
$$
    m_2 > t - 2m_3 - m_1/2 = t - 2(s - m_1 - m_2) - m_1/2 = t - 2s + 3m_1/2 + 2m_2,
$$
hence
$$
    m_2 < 2s - t - 3m_1/2.
$$
As $m_1$ is an even integer, we can deduce that $m_2\leq 2s-t-3m_1/2-1$. Therefore, the number of missing edges in $G$ is at least
\begin{align*}
    f(m_1, m_2, s) &\ge f(m_1, 2s - t - 3m_1/2 - 1, s) \\
    &= \binom{s - m_1}{2} + s(n-s) - \ell(2s - t - m_1/2 - 1) \\
    &:= g(m_1, s).
\end{align*}
Next, observe that $g$ is increasing in $m_1$ for $0 
\le m_1 \le s$:
$$
    \frac{\partial g}{\partial m_1} = m_1  - s + (\ell+1)/2 > 0,
$$
where the second inequality follows from
$$
    s \le \alpha n \le \ell / 2.
$$
Therefore, we have
$$
    g(m_1, s) \ge g(0, s) = \binom{s}{2} + s(n-s) - \ell(2s - t - 1) =:w(s).
$$
The function $w$ is increasing in $s$ in the interval $0 \le s \le n - 2\ell - 1/2$. Note that we have $s = |L| \le \alpha n < n - 2\ell - 1/2$. Moreover, we need only consider the case $s \ge k$: If $s \le k - 1$, then $2s \le t$ thus we can trivially cover every vertex in $L$ with a triangle which uses two new vertices from $W$. Since the function $w$ is increasing, its minimum is achieved when $s=k$, i.e.
$$
    w(s) \ge \binom{k}{2} + k(n - k) - \ell(2k - t - 1).
$$
If $t$ is odd then
$$
    w(s) \ge \binom{k}{2} + k(n-k).
$$
As $w(s)$ is a lower bound on the number of missing edges in $G$, we get a contradiction with \eqref{eq:missing}. Suppose now that $t$ is even. Unlike in the previous case, we do not immediately get a contradiction with \eqref{eq:missing} as $w(k)$ is, in fact, smaller than $d$. However, $w(k+1)$ is larger than $d$, thus we only need to consider the case $|L| = k = t/2 + 1$. If $m_1 > 0$ then $m_1 \ge 2$ (because it is even), in which case we get
$$
    m_1/2 + m_2 + 2m_3 \le m_1/2 + 2(k - m_1) \le t.
$$
Therefore, we can assume $m_1 = 0$. Similarly, if $m_2 \ge 2$, then again $m_2 + 2m_3 \le t$. If $m_2 = 0$, then $L$ is a set of isolated vertices in $G$, thus the number of missing edges is at least
$$
    \binom{k}{2} + k(n-k) > \binom{k}{2} + k(n-k-1) = d,
$$
which is a contradiction. Finally, we can assume $m_2 = 1$. In particular, K\H{o}nig's theorem implies that the vertex cover number of the bipartite graph induced by $L$ and $V(G) \setminus L$ is exactly $1$, i.e.~ every edge in this bipartite graph touches some vertex $v$. If $v \in V(G)\setminus L$, then this bipartite graph is missing at least $(n-k-1)k$ edges. Together with the fact that $L$ is an independent set, we get a contradiction with $d$. Otherwise, suppose $v \in L$. If there is an edge in the neighbourhood of $v$ in $V(G) \setminus L$, then we could saturate $v$ without using any vertex from $W$, which gives enough space in $W$ to saturate $L \setminus \{v\}$. Let us denote the number of neighbours of $v$ in $V(G) \setminus L$ by $p$. The number of missing edges in $G$ is now at least
$$
    \binom{k}{2} + \binom{p}{2} + k(n-k) - p,
$$
which is easily seen to be larger than $d$ for any positive value of $p$. This exhausts all the possibilities and we can conclude that there exists a collection of vertex-disjoint triangles which saturate all vertices in $L$.

Next, let $U_1 \subseteq V(G')$ denote the set of vertices which do not belong to previously obtained triangles. Recall that every vertex $v \in I$ has degree at most $h$ in $G$, thus the number of missing edges incident to $I$ is at least
$$
    \frac{|I|(n - 1 - h)}{2} \le d\leq 0.0005n^2.
$$
This implies $|I| \le \beta n$. We saturate vertices in $I$ by sequentially choosing triangles in a greedy way by only using edges in $G$. Suppose we have saturated some $i \le |I| - 1$ vertices from $I$. Choose an arbitrary unsaturated vertex $v \in I$. As the degree of $v$ in $G$ is at least $\ell$, we have that there are still
$$
    \ell - 3|L| - 3 i \ge \ell - 3\alpha n - 3\beta n \geq \gamma n
$$
`available' neighbors of $v$ in $G$ (i.e.\ neighbors of $v$ which are not part of any triangle so far). In order to saturate $v$ we just need to find an edge within its available neighbors. However, this is possible as otherwise there are at least
$$
    \binom{\gamma n}{2} > 0.0005n^2\geq  d
$$
edges missing in $G$. To conclude, so far we have found a collection of vertex-disjoint triangles which saturate every vertex in $L \cup I$. 

Finally, let $U \subseteq V(G')$ be the set of vertices used up so far. Note that 
$$
    |L| + |I| \le |U| \le 3|L| + 3|I|\leq 3(\alpha+\beta)n.
$$
We are left with $y = n + t - |U|$ vertices and the minimum degree in the remaining graph is at least $h + t - |U| \ge 2y/3$ (where the last inequality follows from $h-\alpha n-\beta n \geq 2n/3$). By the Corr\'adi-Hajnal theorem, $G' \setminus U$ contains a triangle-factor, which finishes the proof.
\end{proof}

Let us give the constructions which show the optimality of Theorem \ref{thm:corrhajdef}. If $t$ is odd then let $G$ be a graph obtained from $K_n$ by taking an arbitrary set of $\frac{t+1}{2}$ vertices an deleting all edges incident to them. Otherwise, if $t$ is even then take a set $S \subseteq V(K_n)$ of size $t/2 + 1$ and a vertex $v \in V(K_n) \setminus S$, and delete all the edges incident to vertices in $S$ save the ones which are also incident to $v$.



\section{Concluding remarks}\label{sec:conc}
There are many interesting problems that remain open. We proved the deficiency analogue of the Corr\'adi-Hajnal theorem in the case $t$ is small. Similarly as in the case of Hamilton cycles (Theorem \ref{thm:hami}), there is another natural construction that is better for larger values of $t$: delete all edges inside a set of size $\frac{n+t}{3} + 1$. We believe that one of the three constructions is always optimal (recall that for small $t$ our construction depends on the parity of $t$). The results of Treglown~\cite{treglown} and Kierstead--Kostochka~\cite{kirkos} might be useful for obtaining further improvements on deficiency results on $K_k$-factors. A slightly different question in the same spirit is, given a graph $G$ with $m$ edges and $n$ vertices, how many vertex-disjoint triangles can we find in $G$?

There are other natural global spanning properties one might consider. For example, let $\PP$ be the property that a $k$-uniform hypergraph $\HH$ has a perfect matching, i.e.~a collection of disjoint edges covering every vertex of $\HH$. The deficiency problem then is as follows: given a $k$-uniform hypergraph $\HH$ with $n$ vertices and an integer $t$ such that $\HH\ast K_t$ does not have a perfect matching, at most how many edges can a $\HH$ have? Note that here we do not get a new question. It is easy to see that this problem is precisely equivalent to the famous Erd\H{o}s Matching Conjecture~\cite{emp} (see also Chapter 9 in~\cite{franklbook}).

In this paper we completely solve the problem of determining the deficiency of graphs with respect to Hamiltonicity. It would be interesting to obtain the solution for the analogous question in 3-uniform hypergraph setting. Here one might want to study this problem for both loose and tight Hamilton cycles. For a survey on known results on Hamiltonian cycles in hypergraphs see~\cite{kuhn2014hamilton}. In particular, Han and Zhao~\cite{hanzhao} have found new bounds on the minimum degree threshold that guarantees Hamiltonian cycles in uniform hypergraphs. Finally, another natural question to consider is the deficiency for powers of a Hamiltonian cycle.

\bibliography{mybib}

\begin{thebibliography}{10}

\bibitem{lindner3}
L.~Andersen, A.~Hilton, and E.~Mendelsohn.
\newblock Embedding partial {Steiner} triple systems.
\newblock {\em Proceedings of the London Mathematical Society}, 3(3):557--576,
  1980.

\bibitem{andhilt}
L.~D. Andersen and A.~J. Hilton.
\newblock Thank {Evans}!
\newblock {\em Proceedings of the London Mathematical Society}, 3(3):507--522,
  1983.

\bibitem{barber}
B.~Barber, D.~K{\"u}hn, A.~Lo, D.~Osthus, and A.~Taylor.
\newblock Clique decompositions of multipartite graphs and completion of
  {Latin} squares.
\newblock {\em Journal of Combinatorial Theory, Series A}, 151:146--201, 2017.

\bibitem{bryant2002conjecture}
D.~Bryant.
\newblock A conjecture on small embeddings of partial {Steiner} triple systems.
\newblock {\em Journal of Combinatorial Designs}, 10(5):313--321, 2002.

\bibitem{lindner4}
D.~Bryant.
\newblock Embeddings of partial {Steiner} triple systems.
\newblock {\em Journal of Combinatorial Theory, Series A}, 106(1):77--108,
  2004.

\bibitem{lindnerproof}
D.~Bryant and D.~Horsley.
\newblock A proof of {Lindner's} conjecture on embeddings of partial {Steiner}
  triple systems.
\newblock {\em Journal of Combinatorial Designs}, 17(1):63--89, 2009.

\bibitem{bryant2004existence}
D.~Bryant, B.~Maenhaut, K.~Quinn, and B.~S. Webb.
\newblock Existence and embeddings of partial {Steiner} triple systems of order
  ten with cubic leaves.
\newblock {\em Discrete mathematics}, 284(1-3):83--95, 2004.

\bibitem{chvatal}
V.~Chv{\'a}tal.
\newblock On {Hamilton's} ideals.
\newblock {\em Journal of Combinatorial Theory, Series B}, 12(2):163--168,
  1972.

\bibitem{latin3}
C.~J. Colbourn.
\newblock The complexity of completing partial {Latin} squares.
\newblock {\em Discrete Applied Mathematics}, 8(1):25--30, 1984.

\bibitem{colbourn1983completing}
C.~J. Colbourn, M.~J. Colbourn, and A.~Rosa.
\newblock Completing small partial triple systems.
\newblock {\em Discrete Mathematics}, 45(2-3):165--179, 1983.

\bibitem{corradi}
K.~Corr\'adi and A.~Hajnal.
\newblock On the maximal number of independent circuits in a graph.
\newblock {\em Acta Mathematica Hungarica}, 14(3-4):423--439, 1963.

\bibitem{cruse}
A.~B. Cruse.
\newblock On the finite completion of partial {Latin} cubes.
\newblock {\em Journal of Combinatorial Theory, Series A}, 17(1):112--119,
  1974.

\bibitem{dayhag}
D.~Daykin and R.~Haggkvist.
\newblock Completion of sparse partial {Latin} square.
\newblock In {\em Proc. Cambridge Conference in Honour of Paul Erd\H{o}s},
  1983.

\bibitem{dross2016fractional}
F.~Dross.
\newblock Fractional triangle decompositions in graphs with large minimum
  degree.
\newblock {\em SIAM Journal on Discrete Mathematics}, 30(1):36--42, 2016.

\bibitem{emp}
P.~Erd\H{o}s.
\newblock A problem on independent {$r$}-tuples.
\newblock {\em Ann. Univ. Sci. Budapest. E\"{o}tv\"{o}s Sect. Math.}, 8:93--95,
  1965.

\bibitem{euler}
L.~Euler.
\newblock Recherches sur un nouvelle esp{\'e}ce de quarr{\'e}s magiques.
\newblock {\em Verhandelingen uitgegeven door het zeeuwsch Genootschap der
  Wetenschappen te Vlissingen}, pages 85--239, 1782.

\bibitem{evans}
T.~Evans.
\newblock Embedding incomplete {Latin} squares.
\newblock {\em The American Mathematical Monthly}, 67(10):958--961, 1960.

\bibitem{fischer1999variants}
E.~Fischer.
\newblock Variants of the {H}ajnal-{S}zemer{\'e}di theorem.
\newblock {\em Journal of Graph Theory}, 31(4):275--282, 1999.

\bibitem{franklbook}
P.~Frankl and N.~Tokushige.
\newblock {\em Extremal Problems for Finite Sets}.
\newblock Student mathematical library. American Mathematical Society, 2018.

\bibitem{ganterquad}
B.~Ganter.
\newblock Finite partial quadruple systems can be finitely embedded.
\newblock {\em Discrete Mathematics}, 10(2):397--400, 1974.

\bibitem{GKLMO}
S.~Glock, D.~K{\"u}hn, A.~Lo, R.~Montgomery, and D.~Osthus.
\newblock On the decomposition threshold of a given graph.
\newblock {\em Journal of Combinatorial Theory, Series B}, to appear.

\bibitem{gustavsson}
T.~Gustavsson.
\newblock {\em Decompositions of large graphs and digraphs with high minimum
  degree}.
\newblock Stockholm, 1991.

\bibitem{hajszem}
A.~Hajnal and E.~Szemer{\'e}di.
\newblock Proof of a conjecture of {P.} {Erd\H{o}s}.
\newblock {\em Combinatorial theory and its applications}, 2:601--623, 1970.

\bibitem{hanzhao}
J.~Han and Y.~Zhao.
\newblock Forbidding {H}amilton cycles in uniform hypergraphs.
\newblock {\em Journal of Combinatorial Theory, Series A}, 143:107--115, 2016.

\bibitem{horsley2014embedding}
D.~Horsley.
\newblock Embedding partial {Steiner} triple systems with few triples.
\newblock {\em SIAM Journal on Discrete Mathematics}, 28(3):1199--1213, 2014.

\bibitem{horsley2014small}
D.~Horsley.
\newblock Small embeddings of partial {Steiner} triple systems.
\newblock {\em Journal of Combinatorial Designs}, 22(8):343--365, 2014.

\bibitem{keevash}
P.~Keevash.
\newblock The existence of designs.
\newblock {\em arXiv:1401.3665}, 2014.

\bibitem{keevash2015multipartite}
P.~Keevash and R.~Mycroft.
\newblock A multipartite {H}ajnal--{S}zemer{\'e}di theorem.
\newblock {\em Journal of Combinatorial Theory, Series B}, 114:187--236, 2015.

\bibitem{kirkos}
H.~A. Kierstead and A.~V. Kostochka.
\newblock An {O}re-type theorem on equitable coloring.
\newblock {\em Journal of Combinatorial Theory, Series B}, 98(1):226--234,
  2008.

\bibitem{kirkman}
T.~P. Kirkman.
\newblock On a problem in combinations.
\newblock {\em Cambridge and Dublin Mathematical Journal}, 2:191--204, 1847.

\bibitem{kuhn2014hamilton}
D.~K{\"u}hn and D.~Osthus.
\newblock Hamilton cycles in graphs and hypergraphs: an extremal perspective.
\newblock {\em arXiv preprint arXiv:1402.4268}, 2014.

\bibitem{latin4}
C.~C. Lindner.
\newblock Finite embedding theorems for partial {Latin} squares, quasi-groups,
  and loops.
\newblock {\em Journal of Combinatorial Theory, Series A}, 13(3):339--345,
  1972.

\bibitem{lindner2}
C.~C. Lindner.
\newblock A partial {Steiner} triple system of order $n$ can be embedded in a
  {Steiner} triple system of order $6n+ 3$.
\newblock {\em Journal of Combinatorial Theory, Series A}, 18(3):349--351,
  1975.

\bibitem{lindner}
C.~C. Lindner and T.~Evans.
\newblock {\em Finite embedding theorems for partial designs and algebras},
  volume~56.
\newblock Presses de l'Universit{\'e} de Montr{\'e}al, 1977.

\bibitem{lo2013multipartite}
A.~Lo and K.~Markstr{\"o}m.
\newblock A multipartite version of the {H}ajnal--{S}zemer{\'e}di theorem for
  graphs and hypergraphs.
\newblock {\em Combinatorics, Probability and Computing}, 22(1):97--111, 2013.

\bibitem{pathcover2}
C.~Magnant and D.~M. Martin.
\newblock A note on the path cover number of regular graphs.
\newblock {\em Australasian Journal of Combinatorics}, 43:211--217, 2009.

\bibitem{montgomery2017fractional}
R.~Montgomery.
\newblock Fractional clique decompositions of dense graphs.
\newblock {\em Random Structures \& Algorithms}, 2017.

\bibitem{pathcover1}
S.~Noorvash.
\newblock Covering the vertices of a graph by vertex-disjoint paths.
\newblock {\em Pacific Journal of Mathematics}, 58(1):159--168, 1975.

\bibitem{ore}
O.~Ore.
\newblock Arc coverings of graphs.
\newblock {\em Annali di Matematica Pura ed Applicata}, 55(1):315--321, 1961.

\bibitem{ozanam}
J.~Ozanam.
\newblock R\'ecr\'eations math\'ematiques et physiques, qui contiennent les
  probl\`emes et les questions les plus remarquables et les plus propres a
  piquer la curiosit\'etant des math\'ematiques que de la physique.
\newblock IV:434.

\bibitem{phelpssurvey}
K.~T. Phelps.
\newblock Combinatorial designs and perfect codes.
\newblock {\em Electronic Notes in Discrete Mathematics}, 10:220--234, 2001.

\bibitem{ryser}
H.~Ryser.
\newblock A combinatorial theorem with an application to {Latin} rectangles.
\newblock {\em Proceedings of the American Mathematical Society},
  2(4):550--552, 1951.

\bibitem{skupien}
Z.~Skupie{\'n}.
\newblock Hamiltonian circuits and path coverings of vertices in graphs.
\newblock In {\em Colloquium Mathematicum}, volume~30, pages 295--316.
  Institute of Mathematics Polish Academy of Sciences, 1974.

\bibitem{smetaniuk}
B.~Smetaniuk.
\newblock A new construction on {Latin} squares i: A proof of the {Evans}
  conjecture.
\newblock {\em Ars Combin}, 11:155--172, 1981.

\bibitem{lindner1}
C.~Treash.
\newblock The completion of finite incomplete {S}teiner triple systems with
  applications to loop theory.
\newblock {\em Journal of Combinatorial Theory, Series A}, 10(3):259--265,
  1971.

\bibitem{treglown}
A.~Treglown.
\newblock A degree sequence {H}ajnal--{S}zemer{\'e}di theorem.
\newblock {\em Journal of Combinatorial Theory, Series B}, 118:13--43, 2016.

\bibitem{wanless}
I.~M. Wanless.
\newblock A generalisation of transversals for {L}atin squares.
\newblock {\em The Electronic Journal of Combinatorics}, 9(1):R12, 2002.

\bibitem{wilson}
R.~M. Wilson.
\newblock An existence theory for pairwise balanced designs, iii: Proof of the
  existence conjectures.
\newblock {\em Journal of Combinatorial Theory, Series A}, 18(1):71--79, 1975.

\end{thebibliography}
\bibliographystyle{abbrv}

\end{document}